\mathchardef\mhyphen="2D
\newtheorem{theorem}[subsection]{Theorem}
\newtheorem{lemma}[subsection]{Lemma}
\newtheorem{prop}[subsection]{Proposition}
\newtheorem{main}{Theorem}
\theoremstyle{definition}
\newtheorem{example}[subsection]{Example}
\newtheorem{rem}[subsection]{Remark}
\newtheorem{question}[subsection]{Question}
\newtheorem*{question*}{Question}
\DeclareMathOperator{\SL}{SL}
\DeclareMathOperator{\GL}{GL}
\DeclareMathOperator{\PGL}{PGL}
\DeclareMathOperator{\SO}{SO}
\DeclareMathOperator{\PSL}{PSL}
\def\XX{\mathcal{X}}
\def\FF{\mathcal{F}}
\def\kk{\mathfrak{k}}
\def\Hy{\mathcal{H}}
\def\HH{{H}}
\def\G{\mathrm{G}}
\def\M{\mathrm{M}}
\def\cO{\mathcal{O}}
\def\cP{\mathcal{P}}
\def\disc{\mathrm{D}}
\def\reg{\mathrm{R}}
\def\dim{\mathrm{dim}}
\def\rank{\mathrm{rank}}
\def\e_HW{\epsilon_{HW}}
\def\Z{{\mathbb Z}}
\def\Q{{\mathbb Q}}
\def\R{{\mathbb R}}
\def\C{{\mathbb C}}
\def\F{{\mathbb F}}
\newcommand\area{\mathrm{area}}
\newcommand\vol{\mathrm{vol}}
\newcommand\len{\ell}
\newcommand\absys{\mathrm{absys}}
\newcommand\sys{\mathrm{sys}}
\newcommand\Rrk{{\mathbb{R}\mhyphen\mathrm{rank}}}
\def\An{\mathrm{A}}
\def\Bn{\mathrm{B}}
\def\Cn{\mathrm{C}}
\def\Dn{\mathrm{D}}
\def\En{\mathrm{E}}
\def\Fn{\mathrm{F_4}}
\def\Gn{\mathrm{G_2}}
\begin{document}
\title{Growth of $k$-dimensional systoles in congruence coverings}


\author{Mikhail Belolipetsky}
\thanks{Belolipetsky is partially supported by CNPq and FAPERJ research grants.}
\address{
IMPA\\
Estrada Dona Castorina, 110\\
22460-320 Rio de Janeiro, Brazil}
\email{mbel@impa.br}

\author{Shmuel Weinberger}
\thanks{Weinberger is partially supported by NSF grant DMS-2105451.}

\address{
University of Chicago \\
5734 University Ave. \\
Chicago, IL 60637 \\
USA}
\email{shmuel@math.uchicago.edu}

\begin{abstract}
We study growth of absolute and homological $k$-dimensional systoles of arithmetic $n$-manifolds along congruence coverings. Our main interest is in the growth of systoles of manifolds whose real rank $r \ge 2$. We observe, in particular, that in some cases for $k = r$ the growth function tends to oscillate between a power of a logarithm and a power function of the degree of the covering. This is a new phenomenon. We also prove the expected polylogarithmic and constant power bounds for small and large $k$, respectively.
\end{abstract}

\maketitle

\section{Introduction}\label{sec1}

Consider a semisimple Lie group $H$ without compact factors and corresponding symmetric space $\XX$. For example, we can take $H = \SL_2(\R)$, $\SL_3(\R)$, $\SL_2(\R)\times\SL_2(\R)$, etc. with the corresponding symmetric spaces --- the hyperbolic plane $\Hy^2$, the space of $3\times 3$ positive-definite matrices of determinant $1$, the product space $\Hy^2\times\Hy^2$. These first few examples will, in fact, already suffice for introducing the main questions and results of the paper. Let $M = \Gamma\backslash \XX$ be an arithmetic $\XX$-locally symmetric space and $\{M_i \to M\}_{i = 1,2,\ldots}$ a sequence of its regular \emph{congruence coverings} of degrees $d_i$. The construction of such sequences is well known, we will review it in Section~\ref{sec2.1}.
Denote by $\absys_k$ the \emph{absolute $k$-dimensional systole} of $M$, which is defined as the infimum of $k$-dimensional volumes of the subsets of $M$ which cannot be homotoped to $(k-1)$-dimensional subsets in $M$ (see~\cite{Gromov_Luminy92}). It is clear that $\absys_n(M_i) = \vol(M_i) = \vol(M) d_i$ and it is well known, although less obvious, that $\absys_1(M_i)$ has the same degree of growth as $c_1 \log(d_i)$ for a positive constant $c_1$ (see e.g. \cite[Proposition~16]{GuthLub15}). We are interested in understanding the growth rate of the intermediate systoles $\absys_k(M_i)$, $k = 2, \ldots, n-1$, in such sequences. The same problem can be considered for more general expanding families of non-positively curved manifolds or orbifolds, for example, for the sequences studied by Raimbault in \cite{Raimb13}. We expect that similar phenomena will occur in the general setting but postpone its investigation for the future.

The real rank of a symmetric space $\XX$ is equal to the maximal dimension of a connected, totally geodesic, flat submanifold in $\XX$. This also defines the $\Rrk(M)$ of the locally symmetric spaces covered by $\XX$ and the real rank of the associated Lie group $H$. The spaces and Lie groups with real rank greater than or equal to $2$ are called \emph{higher rank} and, although the statements of some results will cover the rank $1$ spaces as well, we will be mainly interested in the higher rank case. The question about the growth of $k$-dimensional systoles in congruence coverings was first raised by Gromov (see \cite[pp. 338--339]{Gromov_Luminy92}). He indicated, in particular, that one might expect to have the polylogarithmic (i.e. a polynomial in the logarithm) growth of $\absys_k(M_i)$ for $k\le \Rrk(M)$ and a constant power growth for bigger $k$. Our main result is as follows.

\begin{main}\label{thm A}
Let $M$ be a compact arithmetic locally symmetric space of real rank~$r$, $\{M_i \to M\}$ a sequence of congruence coverings of degrees $d_i$, and let $r_1 = r_1(M) \le r$ be a parameter of the root system associated to the algebraic group $\G$ defining $M$. Then $\absys_k(M_i)$ grows polylogarithmically with $d_i$ for $1\le k \le \max\{1,r_1\}$ and as a power function for $r < k \le n$.
\end{main}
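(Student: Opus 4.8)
The plan is to convert the statement into a combinatorial one and then treat the small and large ranges of $k$ by essentially unrelated mechanisms. First I would fix a bounded‑geometry smooth triangulation of $M$ and pull it back to each $M_i$, so that $M_i$ carries $\asymp d_i$ simplices of uniformly bounded size and local complexity; by the Federer--Fleming deformation theorem $\absys_k(M_i)$ is then comparable, up to constants depending only on $M$ and $k$, to the least number of $k$-simplices in a simplicial $k$-cycle of $M_i$ that cannot be deformed into a lower‑dimensional subcomplex. This lets me pass freely between $k$-volumes and numbers of cells, and it is the language in which both halves of the theorem are most cleanly phrased.

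The polylogarithmic lower bound I would prove uniformly in $k$ from the logarithmic growth of injectivity radii in congruence towers. If $\gamma$ lies in the principal congruence subgroup of level $\mathfrak a_i$, then $\gamma\equiv 1\pmod{\mathfrak a_i}$, so a nonvanishing entry of $\gamma-1$ is an algebraic integer divisible by $\mathfrak a_i$; hence the operator norm of $\gamma$, and with it the translation length of $\gamma$ on $\XX$, is $\gtrsim\log N\mathfrak a_i\gtrsim\frac{1}{\dim\G}\log d_i$, so $\mathrm{injrad}(M_i)\gtrsim\log d_i$. A connected simplicial $k$-cycle built from $N$ cells lies in a connected subcomplex of $\asymp N$ cells and diameter $\lesssim N$; if $N\ll\log d_i$ this subcomplex sits inside an embedded metric ball, which lifts isometrically to a convex (hence contractible) ball of $\XX$, so the cycle is inessential. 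Thus $\absys_k(M_i)\gtrsim\log d_i$ for every $k$, which gives in particular the lower bound for $1\le k\le\max(1,r_1)$.

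For the matching upper bound when $k\le\max(1,r_1)$ I would build cheap essential cycles. When $k=1$ this is soft: a ball of radius $\rho\ge 1$ in $M_i$ has volume $\gtrsim e^{c\rho}$ while $\vol(M_i)=d_i\vol(M)$, so the minimal injectivity radius of $M_i$ is $\lesssim\log d_i$, and a shortest noncontractible loop, of length $\lesssim\log d_i$, is freely homotopic to a closed geodesic and hence essential. When $2\le k\le r_1$ I would use totally geodesic flat $k$-tori: by the root‑theoretic meaning of $r_1$, $\G$ carries an $\R$-split $\Q$-subtorus $\mathbf T$ of dimension $r_1$ whose centralizer has $\R$-isotropic semisimple part; $\mathbf T(\cO)$ has full rank $r_1$ and acts cocompactly by translations on an $r_1$-flat of $\XX$, and conjugating the resulting flat torus inside that $\R$-isotropic centralizer produces, for $k\le r_1$, pairwise non‑homotopic totally geodesic flat $k$-tori in exponential profusion — the number $N_k(V)$ of those of $k$-volume at most $V$ should satisfy $\log N_k(V)\gtrsim V$ (whereas for $k>r_1$ this count is only polynomial in $V$, which is exactly why the construction stops at $r_1$). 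Distributing this family over the $d_i$ sheets of $M_i\to M$ and invoking the spectral gap of the congruence tower (property $(\tau)$), in the shape of equidistribution of these totally geodesic subsets relative to $\Gamma_i$, once $e^{cV}\gtrsim d_i$, i.e.\ $V\asymp\log d_i$, at least one such $k$-torus lifts to $M_i$ as a closed flat $k$-torus of volume $\le V$; being totally geodesic it is $\pi_1$-injective and carries a nonzero class in $H_k(M_i;\Q)$, so $\absys_k(M_i)\lesssim\log d_i$.

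Finally, for $r<k\le n$ the power upper bound is immediate: fixing a fixed essential $k$-cycle of $M$ (using that $H_k(M;\Q)\ne0$ for $k$ in the relevant range, by Matsushima's formula) and pulling it back under the $d_i$-fold cover gives, by the transfer, a nonzero class in $H_k(M_i;\Q)$ with a representative of volume $\le C_M d_i$, so $\absys_k(M_i)\lesssim d_i$. For the power lower bound $\absys_k(M_i)\gtrsim d_i^{c}$ I would use that $\{M_i\}$ is a family of expanders: in codimension one this is precisely that a bisecting hypersurface of $M_i$ has area $\gtrsim h(M_i)\,d_i\gtrsim d_i$, and since for $k>r=\Rrk(\XX)$ there are no $k$-flats in $\XX$ to supply cheap essential cycles, the isoperimetric behaviour of $M_i$ in dimension $k$ is governed by the spectral gap rather than by flats, forcing any non‑bounding simplicial $k$-cycle to use $\gtrsim d_i^{c}$ cells. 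The \emph{main obstacle} is the upper bound for $2\le k\le r_1$: one must genuinely establish the exponential lower bound $\log N_k(V)\gtrsim V$ on the count of totally geodesic flat $k$-tori and combine it with effective equidistribution to force one of them to close up in $M_i$ with only logarithmic volume --- this is where the precise value of the root‑theoretic parameter $r_1$ is pinned down and where the estimates are most delicate; the power lower bound for $k>r$ is secondary but still requires the correct higher Cheeger‑type inequality for the congruence complexes.
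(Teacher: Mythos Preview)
Your outline has the right three-part architecture, but each of the non-trivial parts rests on a mechanism that either fails or is not the one the paper uses.

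\textbf{Upper bound for $r<k\le n$.} You produce an essential $k$-cycle in $M_i$ by pulling back a nonzero class in $H_k(M;\Q)$ via the transfer, invoking Matsushima. But $H_k(M;\Q)$ can vanish in this range (Borel stability is precisely the statement that many of these groups are trivial), so there need not be any rational class to pull back. The paper avoids homology entirely: since $M$ is aspherical, its $k$-skeleton is essential, and this gives the linear upper bound directly.

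\textbf{Lower bound for $r<k\le n$.} Your sketch invokes expansion and a ``higher Cheeger-type inequality'' for congruence complexes. This is the wrong mechanism, and for intermediate $k$ no such inequality of the needed strength is available off the shelf. The paper's argument is local and does not use the spectral gap at all: by Leuzinger and Wenger, the symmetric space $\XX$ satisfies a \emph{linear} isoperimetric inequality $\mathrm{FillVol}_k(Q)\le c\,\vol_{k-1}(Q)$ precisely for $k>r$; feeding this into the coarea formula shows that the $k$-volume of an essential cycle intersected with a metric ball grows exponentially in the radius, and since the injectivity radius of $M_i$ is $\gtrsim\log d_i$, this yields $\absys_k(M_i)\gtrsim d_i^{\beta_k}$.

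\textbf{Upper bound for $2\le k\le r_1$.} Your route (count flat $k$-tori in $M$ with $\log N_k(V)\gtrsim V$, then use property $(\tau)$ and equidistribution to force one to lift to $M_i$) is an interesting idea but is neither justified here nor needed. More basically, your description of $r_1$ as the dimension of an $\R$-split $\Q$-subtorus with isotropic centralizer is not the paper's definition. The parameter $r_1$ is the \emph{strongly orthogonal rank}: the maximal number of $K$-defined, noncompact, type-$\An_1$ subgroups of $\G$ with mutually commuting images. The paper's construction is then completely direct: for each such subgroup $G_\alpha$, the intersection $G_\alpha\cap\Gamma(\cP_i)$ is again a congruence subgroup of a rank-one group and hence contains a hyperbolic element of displacement $\asymp\log d_i$; the $k$ chosen elements commute (that is the content of strong orthogonality), so they generate $\Z^k$, and Gromoll--Wolf produces a flat $k$-torus of volume $\asymp(\log d_i)^k$ in $M_i$. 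No counting, no equidistribution, no property $(\tau)$.

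Finally, your polylogarithmic lower bound gives only $\absys_k(M_i)\gtrsim\log d_i$. That suffices for the bare statement of Theorem~A, but the paper obtains the sharper $\absys_k(M_i)\gtrsim(\log d_i)^k$ by combining the $1$-systole bound with Gromov's systolic inequality for essential manifolds, matching the upper bound exactly.
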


The number $r_1$ is called the \emph{strongly orthogonal rank} of $M$. It is defined in Section~\ref{sec:sor} where we also study its basic properties. For example, the groups $\SL_{n+1}(\R)$ of type $\An_{n}$ and real rank $n$ 
have the strongly orthogonal rank equal to $[\frac12(n+1)]$, while the split orthogonal groups $\SO(n+1,n)$ of type $\Bn_n$ 
have the real rank and the strongly orthogonal rank both equal to $n$. A classical theorem of Borel \cite{Borel63} shows that these groups do contain cocompact arithmetic lattices and thus Theorem~\ref{thm A} applies to the corresponding locally symmetric spaces.

The precise statement of the theorem with additional information about the intermediate range and the implied constants will be given in Section~\ref{sec:absys}. 

Between the ranks $r_1(M)$ and $r(M)$, we have no reason to believe that generally polylogarithmic growth persists (although it does not persist beyond that).  Assuming some number theoretic conjectures about regulators of number fields, it should ``occasionally'' be that small.  We expect, again on the basis of the generic behavior of regulators that is explained in Section~\ref{rem:regulators}, that the constant power growth also occurs in this range.
However, because it is possible that the smallest cycles do not necessarily have an arithmetic source, we cannot be sure that this is true of the non-arithmetic cycles, as well. 
The existence of the intermediate range is a new phenomenon which does not feature in the conjectural picture suggested by Gromov in \cite{Gromov_Luminy92}. It can be observed only for symmetric spaces of certain types. For example, the spaces corresponding to higher rank simple Lie groups of type $\An$ always have this property while the spaces corresponding to simple split groups of type $\Bn$ do not have it.
One of our main goals in this paper is pointing out the provocative possibility of a variety of behaviors in the intermediate range.

Together with the absolute systoles $\absys_k(M)$ we can also consider \emph{homological systoles} of the congruence coverings. Recall that following Berger the \emph{$k$-dimensional systole} $\sys_k(M; A)$ is defined as the infimum of the $k$-dimensional volumes of the $k$-cycles in $M$ with coefficients in $A$ which are not homologous to zero in $M$. In recent years there has been a lot of interest in studying the rank of homology groups of arithmetic manifolds and its growth along the congruence coverings. We refer to \cite{Calegari15} for a related discussion and references. Our perspective here is different: we focus our attention on the metric properties of non-trivial cycles and extremal representatives. In Section~\ref{sec:homsys} we prove bounds for homological systoles of the congruence coverings with the coefficients in $\Q$, $\Z$, and $\Z_p$. 

In Section~\ref{sec:surfaces} we discuss in more detail $2$-dimensional systoles in higher rank manifolds and geometry of  representing surfaces. We observe that a recent work of Long and Reid \cite{LongReid19} implies that higher rank manifolds of large injectivity radius may contain $\pi_1$-injective surfaces of small genus. This provides
a striking contrast to the rank one case for which it was shown in \cite{Bel13} that if injectivity radius grows to infinity, then the genera of $\pi_1$-injective surfaces grow to infinity too. The induced metric on the small genus surfaces is (close to) a flat metric with singular points. We observe that the surfaces constructed by Long and Reid for $H = \SL_3(\R)$ fell into the intermediate range of Theorem~\ref{thm A} and discuss their possible connection to the absolute systoles of congruence coverings. Indeed, it is the existence of these strange surfaces that prevents us from proving a definite relation between the absolute systole in the intermediate range and regulators of number fields.

\subsection*{Notation}
We are using the topologists' notation $\Z_n = \Z/n\Z$, in particular, $\Z_p$ denotes the integers mod $p$, not the $p$-adic integers.


\section{Preliminaries}\label{sec2}

This paper requires a considerable amount of material from the theory of algebraic groups and related topics. We refer to Humphrey's book \cite{Humph75} as a basic reference. For the exposition more focused on arithmetic subgroups we refer to Witte Morris \cite{WitMor15}. An excellent introduction to geometry of locally symmetric spaces is provided by Eberlein's book \cite{Eber-book}.

\subsection{Arithmetic subgroups of semisimple Lie groups}\label{sec2.1} 
Let $H$ be a semisimple connected linear Lie group without compact factors. Assume that there exists a simply connected algebraic group $\G$ defined over a number field $K$ which admits an epimorphism $\phi:\G(K\otimes_\Q\R)^o \to H$ with a compact kernel. Let us fix a $K$-embedding $\G\hookrightarrow \GL_n$ and define the corresponding group of the integral points $\Gamma = \G(\cO)$ to be $\G(K)\cap \GL_n(\cO)$, where $\cO$ denotes the ring of integers of the field $K$. Then, by the Borel--Harish-Chandra theorem \cite{BorHC62}, $\phi(\G(\cO))$ is a finite covolume discrete subgroup of $H$. Such groups and all the subgroups of $H$ which are commensurable with them are called \emph{arithmetic lattices} (or arithmetic subgroups), and the field $K$ is called their field of definition. Choosing different admissible fields $K$ and groups $\G$ we can produce different commensurability classes of arithmetic subgroups of $H$. It is worth noting that for many groups $H$ this is the only known construction of lattices (i.e. discrete subgroups of finite covolume). In this paper we will always consider \emph{irreducible arithmetic lattices} (a lattice $\Gamma$ in $H$ is called \emph{irreducible} if $\Gamma N$ is dense in $H$ for every non-compact closed normal subgroup $N$ of $H$). By a fundamental theorem of Margulis all irreducible lattices in a higher rank Lie group are arithmetic (see \cite[Theorem~1$'$, p.~4]{Marg91}).  

Given an ideal $\cP$ of $\cO$, the \emph{principal congruence subgroup of level $\cP$} of an arithmetic group $\Gamma = \G(\cO)$ is defined to be $\Gamma(\cP) = \Gamma \cap \GL_n(\cO, \cP)$, where $\GL_n(\cO, \cP)$ denotes the subgroup of matrices in $\GL_n(\cO)$ which are congruent to the identity matrix modulo $\cP$. We define the principal congruence subgroups of $H$ to be the images of such principal congruence subgroups under $\phi$. An arithmetic subgroup which contains some principal congruence subgroup is called a \emph{congruence subgroup}. A celebrated conjecture of Serre asserts that all irreducible arithmetic subgroups of higher rank Lie groups are congruence.

To a lattice $\Gamma$ in $H$ we can associate a \emph{locally symmetric space} $\Gamma\backslash \XX$, where $\XX = H/K$ is the symmetric space of $H$. In this paper, we are interested in geometry of the locally symmetric spaces associated to the sequences of congruence subgroups. Many basic facts about arithmetic subgroups and their locally symmetric spaces can be found in \cite{WitMor15} and the references given there.

\subsection{A Riemannian metric on locally symmetric spaces} Let $H$ be a semisimple real Lie group as above and let $K$ denote its maximal compact subgroup.
Denote by $\mathfrak{h}$ and $\mathfrak{k}$ the Lie algebras of $H$ and $K$, respectively. Since $H$ is semisimple, the Killing form $B(x,y) = \mathrm{Tr}(\mathrm{Ad}(x)\mathrm{Ad}(y))$ is a nondegenerate symmetric bilinear form on $\mathfrak{h}$. Let $\mathfrak{p}$ denote the orthogonal complement of $\mathfrak{k}$ in $\mathfrak{h}$ with respect to the form $B(x,y)$, so that $\mathfrak{h} = \mathfrak{k} \oplus \mathfrak{p}$ is a Cartan decomposition of $\mathfrak{h}$. The restriction of the Killing form to $\mathfrak{p}$ induces a Riemannian metric on the symmetric space $\XX = H/K$ and the $\XX$-locally symmetric spaces. This metric has non-positive sectional curvature.
We refer to \cite{Helg01} for the details and more properties of this metric.

Given a locally symmetric space $M = \Gamma\backslash H/K$, the closed geodesics of $M$ correspond to semisimple elements in $\Gamma$. Following Prasad--Rapinchik, the length of the closed geodesic corresponding to a primitive semisimple element $\gamma\in \Gamma \subset \G(K)$ is given by the formula (\cite[Proposition~8.5]{PR09}):
\begin{equation}\label{length formula}
\len(\gamma) = \left( \sum_{\alpha\in\Phi(\G, \mathrm{T})} (\log |\alpha(\gamma)|)^2 \right)^{1/2} 
\end{equation}
where the summation is over all roots of $\G$ with respect to $\mathrm{T}$, a maximal $\R$-torus of $\G$ containing $\gamma$, and log denotes the natural logarithm. 


\subsection{Systoles} \label{sec:prelim-systoles}
Let $M$ be a closed $n$-dimensional Riemannian manifold of non-positive sectional curvature. In this paper we will consider only locally symmetric manifolds $M$ endowed with the Riemannian metric described above but the following definitions are more general.

Following Gromov \cite{Gromov_Luminy92}, we define the \emph{absolute $k$-dimensional systole} $\absys_k(M)$ as the infimum of $k$-dimensional volumes of the subsets $C$ of $M$ which cannot be homotoped to $(k-1)$-dimensional subsets in $M$. Here a ``subset'' means a piecewise smooth subpolyhedron in $M$. 

Notice that the absolute systole can be infinite. For instance, for the sphere $\mathcal{S}^n$ we have $\absys_k(\mathcal{S}^n) = \infty$ for all $0 < k < n$. Elementary homological algebra implies that for the locally symmetric spaces the situation is different:

\begin{prop}
Let $M$ be a $K(\Gamma, 1)$-space. Then $\absys_k(M) < \infty$ if and only if  $k\leq \mathrm{cd}(\Gamma)$, the cohomological dimension of $\Gamma$.
\end{prop}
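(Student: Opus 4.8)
The plan is to prove the equivalence by relating finiteness of $\absys_k(M)$ to the nonvanishing of some cohomology class that a $k$-subpolyhedron can ``detect.'' First I would make precise the defining property: a subpolyhedron $C \subset M$ ``cannot be homotoped to a $(k-1)$-dimensional subset'' exactly when the inclusion $C \hookrightarrow M$ does not factor up to homotopy through a $(k-1)$-complex. Since $M$ is a $K(\Gamma,1)$, a map $C \to M$ is determined up to homotopy by the induced homomorphism $\pi_1(C) \to \Gamma$ (on each component), so the condition is that this homomorphism cannot be realized by a map from $C$ to a $(k-1)$-dimensional $K(\Gamma,1)$-like target; more usefully, $C$ can be pushed into the $(k-1)$-skeleton of (a $K(\Gamma,1)$ model of) $M$ precisely when the corresponding element dies there. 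The cleanest formulation: if $\mathrm{cd}(\Gamma) = c$, choose a $K(\Gamma,1)$ CW-complex $X$ with cells only in dimensions $\le c$ (possible by Eilenberg--Ganea type results once $\mathrm{cd}\Gamma=c$, with the mild caveat about $c=2$ which does not affect the inequality we need in the hard direction). Then $M \simeq X$, and any $k$-subpolyhedron with $k > c$ maps into $X$, hence by cellular approximation into the $c$-skeleton $=X$, which has dimension $c \le k-1$; so every such $C$ is homotopic to a $\le(k-1)$-dimensional subset and $\absys_k(M) = \infty$ for $k > c$. This handles the ``only if'' direction (contrapositive).

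For the converse, $k \le c = \mathrm{cd}(\Gamma)$, I must exhibit a single $k$-subpolyhedron $C$ that genuinely cannot be compressed to dimension $k-1$, and then any competitor in the infimum, being one specific piecewise-smooth polyhedron, has finite $k$-volume — so it suffices to produce one such $C$ at all to conclude $\absys_k(M) < \infty$ (the infimum over a nonempty set of finite numbers is finite). Wait — more carefully, $\absys_k$ is an infimum over all non-compressible $C$, so finiteness requires only that the collection be nonempty, i.e. that some non-compressible $k$-dimensional $C$ exists; its volume is automatically finite. So the real content is: when $k \le \mathrm{cd}(\Gamma)$, there exists a non-compressible $k$-subpolyhedron. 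By definition of cohomological dimension there is a $\Gamma$-module $A$ and a nonzero class $\xi \in H^k(\Gamma; A) = H^k(M; A)$. I would take a CW structure on $M$, let $C$ be (a piecewise-smooth realization of) the $k$-skeleton $M^{(k)}$, and argue $C$ is not compressible: if it were homotopic into some $(k-1)$-complex $Y \to M$, then the map $M^{(k)} \hookrightarrow M$ would factor up to homotopy through $Y$, hence $H^k(M;A) \to H^k(M^{(k)};A)$ would factor through $H^k(Y;A) = 0$; but $H^k(M;A) \to H^k(M^{(k)};A)$ is injective (the standard fact that restriction to the $k$-skeleton is injective on degree-$k$ cohomology, since $(M, M^{(k)})$ is $k$-connected), contradicting $\xi \neq 0$. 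This is the ``elementary homological algebra'' alluded to.

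The main obstacle is bookkeeping around the notion of ``homotoped'' for a subpolyhedron versus homotopy of the inclusion map as an abstract map, and making sure the factorization-through-a-$(k-1)$-complex argument is airtight: I need that ``$C$ can be homotoped to a $(k-1)$-dimensional subset of $M$'' implies ``the inclusion $C \hookrightarrow M$ is homotopic to a map whose image lies in a $(k-1)$-subpolyhedron,'' which in turn implies it factors up to homotopy through a $(k-1)$-dimensional complex — this last step is immediate, and the subtlety is only that a subpolyhedron of dimension $k-1$ is itself a $(k-1)$-complex. One should also note the contravariant-functoriality point: a homotopy factorization $C \to Y \to M$ with $\dim Y \le k-1$ forces the pullback of any degree-$k$ class to vanish. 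I would also remark that the harmonic/Eells--Sampson reductions from the preceding paragraph in the paper are irrelevant here — finiteness is purely homotopy-theoretic — and that the argument uses nothing about non-positive curvature beyond the $K(\Gamma,1)$ property. Finally, for the strict inequality $k \le \mathrm{cd}\Gamma$ giving $\absys_k < \infty$ (not just $< \infty$ for the exhibited $C$ but as an infimum), I note once more that every piecewise-smooth compact $k$-subpolyhedron of a Riemannian manifold has finite $k$-volume, so nonemptiness of the index set of the infimum is all that is needed, completing both directions.
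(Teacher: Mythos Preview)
Your argument for the substantive direction ($k \le \mathrm{cd}(\Gamma) \Rightarrow \absys_k(M) < \infty$) is essentially the paper's: take $C = M^{(k)}$, use injectivity of the restriction $H^k(M;B) \to H^k(M^{(k)};B)$ for an arbitrary $\Z\Gamma$-module $B$, and observe that compressibility of $M^{(k)}$ would factor this injection through $H^k$ of a $(k-1)$-complex, forcing $H^k(M;B)=0$ for all $B$. The paper writes only this direction (in contrapositive form, $\absys_k=\infty \Rightarrow \mathrm{cd}(\Gamma)<k$); you additionally supply the converse via Eilenberg--Ganea, which the paper leaves out---reasonably so, since in the intended setting of closed aspherical manifolds one has $\mathrm{cd}(\Gamma)=\dim M$, making $k>\mathrm{cd}(\Gamma)$ vacuous.
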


\begin{proof}
If $\absys_k(M) = \infty$, then any rectifiable $k$-skeleton of $M$ would be homotopic to the $(k-1)$-skeleton $M^{k-1}$,
which would imply that for any coefficient system (i.e. $\Z\Gamma$-module) $B$ the injective restriction
map $\HH^k(M; B) \to \HH^k (M^k; B)$ factors through $\HH^k(M^{k-1}; B) = 0$. It shows that for all $B$, $\HH^k(M; B) =
0$, a characterization of $\mathrm{cd}(\Gamma)<k$ (see \cite[page 185]{Brown}).
\end{proof}

Unfortunately, nothing says that these classes are detected by any “conventional” coefficient
system, e.g. a finite dimensional local system – so there probably are examples where there are
no cycles in any conventional sense that detect $\absys_k(M)$.

For locally symmetric manifolds $M$ with a lower bound on injectivity radius one can easily get that $\absys_k(M) \leq c\,\vol(M)$ for all $k$. Moreover, in many cases it is possible to see that in covers (or even in general for manifolds of large volume) we have $\absys_k(M) = o\big(\vol(M)\big)$ --- for example it happens when there are nontrivial geodesic cycles to blame. We obtain the results of this kind in Theorem~\ref{thm3.1}. One might speculate that this is true quite generally, however, when there are no nontrivial geodesic cycles the situation can be markedly different (cf. Remark~\ref{rem4.6}). 

We now proceed with the definition of homological systoles. 
The \emph{$k$-dimensional systole} $\sys_k(M; A)$ is the infimum of the $k$-dimensional volumes of the $k$-cycles in $M$ with coefficients in $A$ which are not homologous to zero in $M$. This classical definition goes back to Berger. For convenience, we say that $\sys_k(M; A) = \infty$ if $\HH_k(M;A)$ is trivial.

It follows immediately from the definitions that for every $k$ we have:
\begin{equation}\label{eq:absys-sys}
 \absys_k(M) \leq \sys_k(M; A).
\end{equation}

 We refer to Gromov's lectures \cite{Gromov_Luminy92} for a comprehensive introduction to systoles and intersystolic inequalities.
 
\subsection{The real rank and flat subspaces} \label{sec:prelim-regulators}
In this paper we are mainly interested in the higher rank symmetric spaces. The distinctive feature of these spaces is the presence of totally geodesic \emph{flats}, i.e. totally geodesic subspaces isometric to a flat Euclidean space. The maximal dimension of the flats is equal to the real rank of $\XX$. These flats give rise to the flat tori in our manifolds $M_i$ whose volumes (with respect to the induced metric) are related to the arithmetic invariants of $M_i$. We now briefly review this relation.  

Let $F$ be a maximal compact flat of dimension $r$ in an arithmetic manifold $M = \Gamma\backslash \XX$. Let us note for the future reference that $r$ is equal to the real rank of the Lie group $H$, which is defined as the dimension of a maximal $\R$-split torus in $H$. The description of flat subspaces of $M$ was given by Mostow if $M$ is compact and by Prasad--Raghunathan in general \cite{Most73, PrRag72}. They are obtained as
$$ F = Z\backslash \FF,$$
where $\FF$ is a maximal flat in $\XX$ and $Z = Z_\Gamma(\gamma)$ is the centralizer of an $\R$-regular (and $\R$-hyper-regular if $\Gamma$ is non-uniform) element $\gamma \in \Gamma$. We recall that an element $\gamma$ is called \emph{$\R$-regular} if the number of eigenvalues, counted with multiplicity, of modulus $1$ of $\mathrm{Ad}(\gamma)$ is minimum possible.
The $\R$-regular elements are those for which the centralizer in $H$ of the polar part has the smallest possible dimension (see~\cite[Section~2]{Most73}).

If $\Gamma$ is a congruence subgroup of a sufficiently large level, we can assume that $\Gamma \subset \GL_s(\cO)$ and hence an $\R$-regular element $\gamma$ is represented by a regular matrix $A$ with the entries in $\cO$. It is well known that in this case the centralizer $Z_{\mathrm{M}_s(K)}(\gamma)$ can be identified with the ring of polynomials in $A$ with coefficients in $K$, and hence with a subring of the field extension $L = K[X]/(P(X))$, where $P(X)$ is the minimal polynomial of $A$ (it is irreducible because $A$ is a regular matrix). Under this isomorphism the elements from $Z_\Gamma(\gamma)$ correspond to the relative units in $L$ (but not all the relative units give rise to the elements in $\Gamma$). 

The infinite part of the group of units of the number field $L$ is generated by independent fundamental units $u_1, \ldots, u_r$, and to each of them we can assign a vector 
$$(N_1\log|\sigma_1(u_i)|, \ldots,  N_d\log|\sigma_d(u_i)|),$$ 
where $\sigma_1$, \dots, $\sigma_d$ denote the archimedean places of $L$ and $N_i = 1$ or $2$ depending on $\sigma_i$ being real or complex. The absolute value of the determinant of an $r\times r$ submatrix of the matrix composed by these vectors does not depend on the choice of the submatrix and is called by the \emph{regulator} of the field $L$. Up to a scaling factor, it is equal to the volume of the $r$-dimensional parallelepiped spanned by the vectors. We refer to Lang's algebraic number theory \cite[Chanter~V]{Lang86} for the classical theory of units and regulators. A similar construction extends to the units in the fields extensions and relative regulators introduced by Berg{\'e} and Martinet in \cite{BM87}. In our case this translates to the relation between the relative regulator and the volume of the associated torus. It can be formally verified by repeating the argument in the proof of \cite[Proposition~8.5(ii)]{PR09} applied to the generators of $Z_\Gamma(\gamma)$, which can be diagonalized simultaneously because they commute. We summarize this discussion in the following statement: 

\begin{prop}
The volume of an $r$-dimensional flat torus $F$ in a real rank $r$ arithmetic manifold $M = \Gamma\backslash \XX$ which is associated to an $\R$-regular (and $\R$-hyper-regular if $\Gamma$ is non-uniform) element $\gamma \in \Gamma$ satisfies the inequality
\begin{equation}\label{eq:vol_F}
	\vol_r(F) \ge c\cdot \reg_{L/K},
\end{equation}
where the fields $K$ and $L$ are defined as above, the relative regulator satisfies $\reg_{L/K} \geq \reg_L/\reg_K$ by its definition in \cite{BM87}, and $c = c(\XX)$ is a positive constant which depends on the normalization of the metric on $\XX$. 
 
\end{prop} 

In some cases it is also possible to bound $\vol_r(F)$ by the regulator from above. For example, this can be done if $\Gamma = \G(\cO_K)$. However, when we descend along a sequence of subgroups $\{\Gamma_i\}$ certain regular elements $\gamma_i \in \Gamma_i$ will give the same field extensions $L/K$ with the same regulator. The volumes of the corresponding flat subspaces $F_i$ in $\XX/\Gamma_i$ are proportional to $\reg_{L/K}\cdot [Z_\Gamma(\gamma_i):Z_{\Gamma_i}(\gamma_i)]$ and grow with the index. 
Frequently these volumes will be greater than the systole and hence we will not need to consider them, but there are also some cases in which they may dominate the systole growth (cf. the second upper bound inequality in Theorem~\ref{thm3.1}).

\medskip

We conclude with a basic illustrative example.

\begin{example}
Let $H = \SL_3(\R)$ and let $\gamma$ be given by the matrix
$$A = 
\begin{pmatrix}
	0 &  0 & 1 \\
	1 &  0 & 4 \\
	0 &  1 & 3 \\
\end{pmatrix}.$$
Its characteristic polynomial $P(X) = -X^3 + 3X^2 + 4X +1$ and the eigenvalues are approximately $-0.69202$, $-0.35690$, and $4.0489$. It follows that $\gamma$ is $\R$-regular, moreover, it is $\R$-hyper-regular (in the case of $\SL_n(\R)$ the hyper-regularity condition from \cite{PrRag72} means that there are no relations $\lambda_1^{m_1} \cdots \lambda_n^{m_n} = 1$ with positive integer exponents $m_i$ among the eigenvalues $\lambda_i$ of the matrix except when $m_1 = \ldots = m_n$). Notice that if $\lambda$ is a root of $P(X)$, then it is a unit in $\Z[\lambda]$, and $\lambda+1$ is a root of $P(X-1) = -X^3 + 6X^2 - 5X + 1$ and hence is also a unit. It is easy to check that they are independent fundamental units of $L = \Q[X]/(P(X))$. 
The associated $\log$-vectors are approximately given by 
\begin{align*}
(\log(0.69202), \log(0.35690), \log(4.0489)) &= (-0.36814, -1.0303, 1.39845), \text{ and }\\
(\log(0.30798), \log(0.64310), \log(5.0489)) &= (-1.17772, -0.441455, 1.61917).
\end{align*}
Hence the regulator $\reg_L$ given by the absolute value of the determinant of a $2\times 2$ submtrix of the matrix formed by these vectors is approximately equal to $1.05088$. The fact that $\reg_L \neq 0$ confirms that the units $\lambda$ and $\lambda+1$ are multiplicatively independent.

Consider the flat subspace $F$ of $M = \SO(3)\backslash \SL_3(\R)/\SL_3(\Z)$ given by the centralizer $Z(\gamma)$ in $\SL_3(\Z)$. We have $Z(\gamma) = \langle A, A+\mathrm{Id}\rangle$, as the matrices $A$ and $A+\mathrm{Id}$ are associated to the fundamental units $\lambda$ and $\lambda + 1$ in the field $L$. We are interested in the volume of the subspace $F$ in $M$. Following the previous discussion, it is given by the area of the parallelogram spanned by the two $\log$-vectors in $\R^3$, which is equal to $\sqrt{3}\cdot\reg_L$.   
\end{example}

\section{Strongly orthogonal rank}\label{sec:sor}

We proceed with defining the parameter $r_1 = r_1(M)$, which we call the \emph{strongly orthogonal rank} of $M$. Assume $M = \Gamma\backslash\XX$ and the arithmetic group $\Gamma$ is associated to an algebraic $K$-group $\G$. Let $\Psi$ represent a set of homomorphisms $\rho_\alpha : \mathrm{A}_\alpha \to \G$ defined over $K$ such that for each $\alpha$ we have:
\begin{itemize}
 \item[-] the group $\mathrm{A}_\alpha$ is an almost simple algebraic $K$-group of type $\An_1$;
 \item[-] for every archimedean place $v: K\to \C$ for which $\G(K_v)$ is not compact the group $\mathrm{A}_\alpha(K_v)$ is also not compact;
 \item[-] the kernel of $\rho_\alpha$ is contained in the center of $\mathrm{A}_\alpha(K)$; and
 \item[-] the images of different $\rho_\alpha$ mutually commute and intersect trivially in $\G(K)$.
\end{itemize}
The strongly orthogonal rank $r_1(M)$ is the maximal cardinality of such a set $\Psi$.

\medskip

Following Tits \cite{Tits66}, algebraic groups of type $\An_1$ defined over a number field $K$ fell into three types associated to quadratic forms, Hermitian forms, and quaternion algebras, respectively. Moreover, in this case all the three types can be obtained as groups of units of quaternion algebras over $K$ (see \cite[Chapters~2 and 7]{MaclReid03} for a detailed description). Over the reals the group $\mathrm{A}_\alpha(K\otimes_\Q\R)$ is isogenous to $\SL_2(\C)^a\times\SL_2(\R)^b\times \SO(3)^c$ with $2a + b + c = [K:\Q]$. 

\medskip

Our notion of strong orthogonality is a generalization of strongly orthogonal root systems. Recall that a subset $\Delta$ of positive roots in an irreducible finite root system $\Phi$ is called a \emph{strongly orthogonal subset} if $\alpha \pm \beta \not\in \Phi\cup\{0\}$, for any two roots $\alpha, \beta \in \Delta$, and a subset $\Delta$ is called a maximal strongly orthogonal if it is strongly orthogonal and is not properly contained in any other strongly orthogonal subset. In particular, the condition $\alpha \pm \beta \not\in \Phi\cup\{0\}$ implies that any two roots in $\Delta$ are orthogonal to each other with respect to the inner product defined by the Killing form. If the group $\G$ associated to $M$ is a split $K$-group, then the sets $\Psi$ defined above are given by the strongly orthogonal subsets of the $K$-root system of $\G$ (see more details in the proof of Proposition~\ref{prop:sor} below). 

The strongly orthogonal root systems previously appeared in various sources. They were first introduced by Harish-Chandra in his work on representations of semisimple Lie groups \cite[Part~II, Section~6]{HC56}.  The maximal orders $N(\Phi)$ of the strongly orthogonal root systems were computed by Hee Oh in \cite{Oh98}, they are given in Table~\ref{table:sor}. Notice that for the types  $\Bn_n$, $\Cn_n$, $\Fn$, $\Gn$, $\En_7$, and $\En_8$ we have $N(\Phi) = \rank(\Phi)$.  

\medskip
In general setting, the homomorphisms $\rho_\alpha$, $\alpha \in \Psi$, define a family of Lie subalgebras $\mathfrak{r}_\alpha$ of the Lie algebra $\mathfrak{g}$ of the algebraic group $\G$ (see \cite[Chaper~III]{Humph75}). Since the images of different $\rho_\alpha$ mutually commute, the commutators of the elements from different $\mathfrak{r}_\alpha$ are zero. Hence different $\mathfrak{r}_\alpha$ are orthogonal $K$-subspaces of $\mathfrak{g}$  with respect to the inner product defined by the Killing form. Therefore, considering $\mathfrak{g}$ as a vector space over $K$, we can choose a basis of  $\mathfrak{g}$ such that the subspaces $\mathfrak{r}_\alpha$ are linearly independent. We now choose an embedding of $\G$ into $\GL_n$ compatible with this $K$-structure. This defines the group of integral points of $\G$ up to commensurability. 

\begin{table}[ht]
\begin{center}
\def\arraystretch{1.2}
  \begin{tabular}{l|l|l}
   \hline
    $N(\Phi)$ & Type of $\Phi$ & Examples of groups\\ \hline
    $[\frac12(n+1)]$ & $\An_n$ & $\SL_{n+1}(\R)$\\
    $n$ & $\Bn_n$ & $\SO(n+1,n)$ \\
    $n$ & $\Cn_n$ & $\mathrm{Sp}(2n,\R)$ \\
    $2[\frac12 n]$ & $\Dn_n$ & $\SO(n,n)$ \\
    $4$ & $\En_6$ & \\
    $\mathrm{rank}(\Phi)$ & $\Fn$, $\Gn$, $\En_7$, $\En_8$ & \\
   \hline
  \end{tabular}
\vskip2em
    \caption{Maximal orders $N(\Phi)$ of the strongly orthogonal root systems (here $[x]$ denotes the largest integer less than or equal to $x$).}
    \label{table:sor}
\end{center}
\end{table}

\begin{example} Let $f = x_1^2 + x_2^2 - \sqrt{2}x_3^2 + x_4^2 - \sqrt{2}x_5^2$ be a quadratic form in $\R^5$ of signature $(3,2)$. It is defined over the field $K = \Q(\sqrt{2})$ and the Galois conjugate form $f^\sigma$ for the non-identity embedding $\sigma: K\to\R$  is positive definite. Therefore, the group $\G = \SO(f;K)$ is an anisotropic $K$-group and there are no strongly orthogonal $K$-roots. On the other hand, the strongly orthogonal rank $r_1(\G) = 2$ is equal to the real rank of the symmetric space of $\G(\R)$. The associated $K$-homomorphisms $\rho_1, \rho_2: \G_1 = \SO(g;K) \to \G$, with $g = y_1^2 + y_2^2 - \sqrt{2}y_3^2$, are given by the maps of $y_1, y_2, y_3$ to $x_1, x_2, x_3$ and $x_1, x_4, x_5$, respectively.
\end{example}

We proceed with some basic properties of the strongly orthogonal rank.

\begin{prop}\label{prop:sor}
Given an algebraic $K$-group $\G$ with the real rank $r$ and the strongly orthogonal rank $r_1$, we have:
\begin{enumerate}
 \item The value $r_1$ is bounded below by the maximal cardinality of a strongly orthogonal subset in the $K$-root system of $\G$;
 \item It is bounded above by the real rank, i.e. $r_1 \le r$;
 \item If $K\mhyphen\mathrm{rank}(\G) = r$, then $r_1$ is equal to the maximal order of the strongly orthogonal root system of $\G$;
 \item There are examples of groups with $r_1 = 0$ and groups with $r_1 = r$.
\end{enumerate}
\end{prop}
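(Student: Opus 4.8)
\emph{Overview and parts (1)--(2).} The plan is to treat the four assertions in turn; call a family $\Psi=\{\rho_\alpha:\mathrm{A}_\alpha\to\G\}$ meeting the four bulleted conditions \emph{admissible}, so that $r_1$ is the supremum of cardinalities of admissible families. For (1), fix a maximal $K$-split torus $S\subseteq\G$ and a strongly orthogonal subset $\Delta\subseteq\Phi(\G,S)$ (in the non-reduced case take $\Delta$ to consist of long roots, so the attached rank-one subgroups are of type $\An_1$). For $\alpha\in\Delta$ let $\G_\alpha=\langle U_\alpha,U_{-\alpha}\rangle$ be the standard rank-one $K$-subgroup attached to $\alpha$, let $\mathrm{A}_\alpha$ be its simply connected cover, and let $\rho_\alpha:\mathrm{A}_\alpha\to\G_\alpha\hookrightarrow\G$ be the resulting $K$-morphism. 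Then $\ker\rho_\alpha\subseteq\{\pm1\}$ is central and $\mathrm{A}_\alpha(K_v)\cong\SL_2(K_v)$ is non-compact at every archimedean place, so the first three conditions hold by construction, and strong orthogonality of $\Delta$ gives $i\alpha+j\beta\notin\Phi(\G,S)$ for $i,j\ge1$ and $\langle\beta,\alpha^\vee\rangle=0$ for $\alpha\neq\beta$, so the commutator relations in $\G$ yield $[\G_\alpha,\G_\beta]=1$. Hence $\{\rho_\alpha\}_{\alpha\in\Delta}$ is admissible and $|\Delta|\le r_1$.

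For (2), let $\Psi$ be admissible, put $\G_\alpha=\rho_\alpha(\mathrm{A}_\alpha)$ and $\mathfrak g_\alpha=\mathrm{Lie}(\G_\alpha)\subseteq\mathfrak g=\mathrm{Lie}(\G)$; since $\ker\rho_\alpha$ is central and the characteristic is zero, each $\mathfrak g_\alpha$ is a form of $\mathfrak{sl}_2$, hence simple with trivial centre. Because the $\G_\alpha$ commute, $\mathfrak g_\alpha$ meets the subalgebra generated by the other $\mathfrak g_\beta$ in a subalgebra centralizing $\mathfrak g_\alpha$, hence in $0$, so $\sum_\alpha\mathfrak g_\alpha=\bigoplus_\alpha\mathfrak g_\alpha$. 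Fix an archimedean place $v_0$ with $\G(K_{v_0})$ non-compact; by hypothesis $\mathrm{A}_\alpha(K_{v_0})$ is non-compact, so $\mathfrak g_\alpha(K_{v_0})$ is a non-compact real form of $\mathfrak{sl}_2$ and contains a one-dimensional $\R$-split toral subalgebra $\mathfrak t_\alpha$. These lie in the independent summands, so $\bigoplus_\alpha\mathfrak t_\alpha$ is an $\R$-split toral subalgebra of $\mathfrak g(K_{v_0})$ of dimension $|\Psi|$; as every $\R$-split torus lies in a maximal one, $|\Psi|\le\Rrk(\G(K_{v_0}))\le r$.

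\emph{Part (3).} Assume $K\mhyphen\mathrm{rank}(\G)=r$ (we take $\G$ almost $K$-simple, as in the intended application). A maximal $K$-split torus is $K_v$-split, and $r$ is the sum of the local real ranks over the non-compact archimedean places; so $K$-rank $=r$ forces a unique such place $v_0$, forces $S$ to be a maximal $K_{v_0}$-split torus, and hence forces $\Phi(\G,S)$ to coincide with the restricted root system $\Sigma$ of $H=\G(K_{v_0})$. By (1), $r_1\ge N(\Sigma)$, the maximal order of a strongly orthogonal subset of $\Sigma$. Conversely, running the argument of (2) at $v_0$ shows that $r_1$ is at most the maximal number of pairwise commuting non-compact $\mathfrak{sl}_2$-subalgebras of $\mathrm{Lie}(H)$, and this number equals $N(\Sigma)$: the inequality $\ge$ is Harish-Chandra's construction of commuting triples from a strongly orthogonal subset of $\Sigma$ \cite{HC56}, while $\le$ is a structural fact about real semisimple Lie algebras whose combinatorial content is exactly Oh's evaluation of $N(\Sigma)$ \cite{Oh98} (Table~\ref{table:sor}). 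Therefore $r_1=N(\Sigma)$. (Note $N(\Sigma)=\mathrm{rank}(\Sigma)=r$ for $\Sigma$ of type $\Bn_n,\Cn_n,\mathrm{BC}_n,\Fn,\Gn,\En_7,\En_8$, so in those cases (3) already follows from (1) and (2); the real content lies in types $\An_n$ with $n\ge2$, $\Dn_n$ with $n$ odd, and $\En_6$.)

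\emph{Part (4) and the main obstacle.} An example with $r_1=r$ is the group of the Example preceding this proposition (where $r_1=r=2$), or any $\Q$-split group of type $\Cn_n$, for which $K\mhyphen\mathrm{rank}=r=n$ and (3) gives $r_1=N(\Cn_n)=n$. For $r_1=0$, take $\G=\SL_1(D)$ with $D$ a division algebra of degree $3$ over $\Q$ split at the real place (such $D$ exists, e.g.\ with invariants $1/3$ and $-1/3$ at two finite primes); then $H=\SL_3(\R)$, $M$ is compact, and $r=2$. Any admissible $\rho:\mathrm{A}\to\G$ would have $\Q$-anisotropic image, since $\SL_1(D)$ is $\Q$-anisotropic; so $\rho(\mathrm{A})$ would be the norm-one (or projective) group of a division quaternion $\Q$-algebra, hence would contain a one-dimensional $\Q$-anisotropic torus, whereas every torus of $\SL_1(D)$ lies in $\mathrm{Res}_{E/\Q}\mathbb{G}_m$ for a cubic field $E\subseteq D$ and so has no one-dimensional $\Q$-subtorus --- a contradiction. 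Thus $r_1(\G)=0$. The main obstacle is the inequality $\le$ in (3): converting the abstract data of a commuting family into a strongly orthogonal subset of $\Sigma$ is where the Harish-Chandra theory \cite{HC56} and Oh's combinatorics \cite{Oh98} are essential, and a self-contained account would have to show that, after conjugation, the $e$-parts of the commuting $\mathfrak{sl}_2$-triples lie in restricted-root spaces $\mathfrak g_{\mu_i}$ indexed by pairwise strongly orthogonal roots $\mu_i$; everything else is routine structure theory of reductive groups over local and global fields.
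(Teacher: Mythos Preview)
Your treatment of (1) and (2) matches the paper's in substance: for (1) you both attach rank-one subgroups to strongly orthogonal $K$-roots via Tits' construction and invoke the Chevalley commutator relations; for (2) the paper works at the group level (tensoring with $\R$ over $\Q$ to get maps into $G=\G(K\otimes_\Q\R)$ and producing commuting one-dimensional $\R$-split tori $T_\alpha$ whose product has dimension $|\Psi|$), while you pass to Lie algebras at a single non-compact place --- the content is the same.

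Part (3) is where the routes genuinely diverge, and your version has a gap the paper avoids. The paper argues entirely over $K$: the hypothesis $K\mhyphen\mathrm{rank}(\G)=r$ forces $\G(K\otimes_\Q\R)$ to have no non-trivial compact factors, so by the second bullet of the admissibility definition each $\mathrm{A}_\alpha(K\otimes_\Q\R)$ has no compact factors either; the paper then concludes that $\mathrm{A}_\alpha$ is $K$-isogenous to $\SL_2$, obtains induced $K$-morphisms $\rho'_\alpha:\SL_2\to\G$, and reads off from these a strongly orthogonal set of $K$-roots of cardinality $|\Psi|$. That gives $r_1\le N(\Phi_K)$ directly, with no appeal to real structure theory beyond what was already used in (1). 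Your route instead bounds $r_1$ by the maximal number of pairwise commuting non-compact $\mathfrak{sl}_2$-subalgebras of $\mathrm{Lie}(H)$ and then asserts this equals $N(\Sigma)$. As you yourself flag, that equality is the ``main obstacle'', and it is not actually the content of the references you cite: Harish-Chandra \cite{HC56} constructs commuting triples \emph{from} a strongly orthogonal set, and Oh \cite{Oh98} computes $N(\Phi)$ for abstract root systems --- neither supplies the converse bound you need. So your (3) defers the crux to an unproved statement, whereas the paper's $K$-level argument sidesteps it by running the construction of (1) in reverse.

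For (4) you and the paper use the same example $\G=\SL_1(D)$ for $r_1=0$. The paper's justification is shorter: for $D$ of prime degree the only proper connected $K$-subgroups of $\SL_1(D)$ are tori, so no almost simple $K$-subgroup of type $\An_1$ exists at all. This is cleaner than your argument via dimensions of $\Q$-subtori, which would still need the (true but unstated) fact that the norm-one torus of a cubic extension has no one-dimensional $\Q$-subtorus. For $r_1=r$ both you and the paper point to split groups of type $\Bn_n$ or $\Cn_n$ together with part (3) and Oh's table.
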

\begin{proof}
(1) Recall that two positive roots $\alpha$ and $\beta$ in a nonmultipliable $K$-root system $\Sigma$ of $\G$ are called strongly orthogonal if neither of $\alpha\pm\beta$ is a root. Following \cite[3.1.(13)]{Tits64}, to each $\alpha \in \Sigma$ we can associate a $K$-homomorphism $\rho_\alpha: \SL_2 \to \G$ whose kernel is contained in the center of $\SL_2$ and which has a prescribed action on the generators of $\SL_2(K)$. The Chevalley commutator relations imply that if the roots $\alpha$ and $\beta$ are strongly orthogonal, then the images of $\SL_2(K)$ under the corresponding homomorphisms commute. Hence a maximal set of pairwise strongly orthogonal roots provides an admissible set $\Psi$ of homomorphisms that define the strongly orthogonal rank of $\G$.

\medskip

(2) Given a $K$-homomorphism $\rho_\alpha : \mathrm{A}_\alpha \to \G$ with $\alpha \in \Psi$, by taking the tensor product with $\R$ over $\Q$ we obtain a homomorphism
$$\tilde\rho_\alpha: \mathrm{A}_\alpha(K\otimes_\Q \R) \to G = \G(K\otimes_\Q \R).$$
The image of $\tilde\rho_\alpha$ is noncompact hence it contains a $1$-dimensional $\R$-split torus $T_\alpha$ of $G$. By the definition of the strongly orthogonal rank, for different $\alpha, \beta \in \Psi$ the subgroups $T_\alpha$ and $T_\beta$ commute in $G$. It follows that the product $\prod_{\alpha\in\Psi}T_\alpha$ is an $\R$-split torus of $G$ of dimension $r_1$ and thus $r_1 \le r$, the maximal dimension of such a torus.

\medskip

(3) If $\Rrk(\G) = K\mhyphen\mathrm{rank}(\G)$, then $\G(K\otimes_\Q \R)$ has no non-trivial compact factors, and hence for any $\alpha \in \Psi$ the group $\mathrm{A}_\alpha(K\otimes_\Q \R)$ has no compact factors either. It follows that $\mathrm{A}_\alpha$ is isogenous to $\SL_2$ over $K$ and we can define an induced $K$-homomorphism $\rho'_\alpha: \SL_2 \to \G$. This allows us to associate to any $\alpha\in \Psi$ a positive root in the $K$-root system of $\G$, and then to associate to $\Psi$ a strongly orthogonal system of roots proving that the maximal cardinality of such a system is $\ge r_1$. The opposite inequality was already shown in (1).

\medskip

(4) Let $D$ be a division algebra of prime degree over $K$ and let $\G = \SL_1(D)$. The group $\G$ has no $K$-defined subgroups except for the $K$-tori and hence $r_1(\G) = 0$. (This example was suggested to us by Andrei Rapinchuk.) For the case $r_1 = r$ we can consider $K$-split groups of types $\Bn$ or $\Cn$ and apply a result of Hee Oh about the cardinality of the strongly orthogonal root systems of split groups \cite{Oh98} (see also Remark~\ref{rem_OrtRoots} in the next section).
\end{proof}

\section{Growth of absolute systoles}\label{sec:absys}

In this section we prove Theorem~\ref{thm3.1} which implies Theorem~\ref{thm A} from the introduction.

\begin{theorem}\label{thm3.1}
Let $M$ be a compact arithmetic $\XX$-locally symmetric space defined over a number field $\kk$ and let $\{M_i \to M\}$ be a sequence of its regular congruence coverings of degrees $d_i$. Assume that the real rank of $\XX$ is $r$ and let $r_1' = \max\{1, r_1(M)\}$, where the strongly orthogonal rank $r_1(M)$ is defined as above.  
\begin{itemize}
\item[(i)] There exist positive constants $a_k$, $b_k$, $\beta_k$, $\gamma_k$ depending on $M$ such that for sufficiently large $d_i$ we have
\begin{align*}
a_k \log^k(d_i) &\le  \absys_k(M_i) \le b_k \log^k(d_i), \textrm{ for }1\le k \le r_1';\\
a_k \log^k(d_i) &\le  \absys_k(M_i) < b_k d_i^{\gamma_k}, \textrm{ for }r_1' < k \le r;\\
a_k d_i^{\beta_k} &\le  \absys_k(M_i) \le  b_k d_i^{\gamma_k}, \textrm{ for }r < k \le n.
\end{align*}
\item[(ii)]
For $r_1' < k \le r$, the constant $\gamma_k < 1$ and for the principal congruence coverings we can take $\gamma_k = \frac13$ if $\XX$ is not of the type $\An_1$ and $\gamma_k = \frac13 + \epsilon$, $\epsilon > 0$ for the type $\An_1$. If for $r < k < n$ some $M_{i_0}$ contains a $k$-dimensional closed totally geodesic submanifold, then $\gamma_k < 1$ in this dimension while otherwise the corresponding $\gamma_k = 1$.
\end{itemize}
\end{theorem}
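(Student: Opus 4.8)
The proof separates into the construction of small essential subpolyhedra, which yields the upper bounds, and lower estimates from the geometry of $M_i$, and in each half the three ranges $1\le k\le r_1'$, $r_1'<k\le r$, $r<k\le n$ are handled separately. Two facts about a regular congruence covering $M_i=\Gamma(\cP_i)\backslash\XX$ are used throughout: $d_i=[\Gamma:\Gamma(\cP_i)]\asymp(N\cP_i)^{\dim\G}$, and $\mathrm{injrad}(M_i)\ge c\log N\cP_i\asymp c'\log d_i$ — the latter because a nontrivial element of $\Gamma(\cP_i)$ is congruent to the identity mod $\cP_i$, so has matrix entries of size $\gtrsim N\cP_i$, hence, by \eqref{length formula} (every element of $\Gamma$ being semisimple since $M$ is compact), translation length $\gtrsim\log N\cP_i$. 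For $1\le k\le r_1'$ the upper bound comes directly from the strongly orthogonal structure of \S\ref{sec:sor}: pick $k$ of the mutually commuting $K$-homomorphisms $\rho_\alpha\colon\mathrm A_\alpha\to\G$, pass to $\R$-points, and intersect the images with $\Gamma(\cP_i)$; this produces nontrivial pairwise commuting $\R$-split elements $g_{\alpha_1},\dots,g_{\alpha_k}$, each $\equiv1$ mod $\cP_i$ and hence of translation length $\asymp\log d_i$. Their common invariant flat descends to a totally geodesic flat $k$-torus $T^k\subset M_i$ with $\vol_k(T^k)\le b_k\log^k d_i$, and since $T^k$ is totally geodesic in the non-positively curved $M_i$ it is $\pi_1$-injective, so that $\pi_1(T^k)=\Z^k$, of cohomological dimension $k$, cannot factor through $\pi_1$ of a $(k-1)$-complex; by the cohomological-dimension argument recalled in \S\ref{sec:prelim-systoles}, $T^k$ is not homotopic into $M_i^{(k-1)}$, so $\absys_k(M_i)\le b_k\log^k d_i$. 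When $r_1=0$ the same reasoning with a single shortest closed geodesic gives $\absys_1(M_i)\le b_1\log d_i$.

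For the matching lower bound $\absys_k(M_i)\ge a_k\log^k d_i$, which in fact holds throughout $1\le k\le r$ (and gives a weaker bound for all $k$), replace an essential $k$-subpolyhedron by a volume-minimizing (minimal) representative $\Sigma$ as in \S\ref{sec:prelim-systoles}. A ball of radius $\mathrm{injrad}(M_i)$ about a point $p\in\Sigma$ lifts isometrically to a Hadamard ball in $\XX$, and in the non-positively curved setting the monotonicity formula for minimal varifolds has no exponential correction, so $\vol_k\big(\Sigma\cap B(p,\mathrm{injrad}(M_i))\big)\ge\omega_k\,\mathrm{injrad}(M_i)^k$, whence $\absys_k(M_i)\ge a_k\log^k d_i$. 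The upper bounds for $r<k\le n$ are elementary. Pulling back a fixed triangulation of $M$ to $M_i$ gives a triangulation with $\asymp d_i$ simplices of uniformly bounded size, whose $k$-skeleton is an essential $k$-subpolyhedron (not homotopic into $M_i^{(k-1)}$ since $\mathrm{cd}(\Gamma_i)=n>k$, again by the argument of \S\ref{sec:prelim-systoles}) of $k$-volume $\lesssim d_i$, so $\gamma_k=1$; for $k=n$ both bounds are the identity $\absys_n(M_i)=\vol(M_i)=d_i\vol(M)$. If some $M_{i_0}$ contains a closed totally geodesic $N^k$, let $\mathbf L$ be the algebraic hull of $\Lambda=\pi_1(N)$; a component of the preimage of $N$ in $M_i\to M_{i_0}$ is $\big(\Lambda\cap\Gamma(\cP_i)\big)\backslash\XX_N$, of volume $[\Lambda:\Lambda\cap\Gamma(\cP_i)]\,\vol(N)\asymp(N\cP_i)^{\dim\mathbf L}\asymp d_i^{\dim\mathbf L/\dim\G}$ by strong approximation, and it is essential (totally geodesic, hence $\pi_1$-injective, of cohomological dimension $k$), so $\gamma_k\le\dim\mathbf L/\dim\G<1$; this uses that a closed totally geodesic submanifold of dimension $\ge2$ is of arithmetic type, so that the congruence structure restricts to $\Lambda$, which follows from semisimplicity of $\mathbf L$ together with $\Lambda<\Gamma$.

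In the intermediate range $r_1'<k\le r$ the lower bound is once more $\absys_k(M_i)\ge a_k\log^k d_i$, from the monotonicity argument above. For the upper bound one can no longer assemble a cheap flat $k$-torus over $\kk$, but one can still construct inside $M_i$ a compact totally geodesic submanifold of arithmetic type of dimension $k$ — a flat torus in general, or, when $\XX$ is of type $\An_1$, a $\pi_1$-injective surface carrying a singular flat metric of the Long--Reid kind discussed in \S\ref{sec:surfaces} — whose induced volume is $\asymp d_i^{\gamma_k}$ with $\gamma_k<1$ explicit, equal to $\tfrac13$ for the principal congruence coverings, and to $\tfrac13+\epsilon$ (for $d_i$ large) in the $\An_1$ case, where the level ranges over ideals rather than prime powers and the finitely many exceptional primes are absorbed by a prime-counting estimate; the exponent is read off from the index of the relevant congruence subgroup of the $\An_1$-type subgroup against $d_i\asymp(N\cP_i)^{\dim\G}$. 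Essentiality of all of these submanifolds follows as before from their being totally geodesic, hence $\pi_1$-injective, of cohomological dimension $k$.

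The remaining inequality, the lower bound $\absys_k(M_i)\ge a_k d_i^{\beta_k}$ for $r<k<n$, is the heart of the theorem and the step I expect to demand the most work. In contrast with the range $k\le r$, here $\XX$ contains no $k$-dimensional flat, so a minimal essential $k$-cycle cannot be even coarsely flat and must genuinely use the negatively curved directions of $\XX$; following Gromov's expected framework, this together with the exponential volume growth of $\XX$ and the injectivity-radius bound above should upgrade the logarithmic estimate to a polynomial one. The resulting exponent $\beta_k$ will be far from optimal, and the main obstacle is to extract a clean self-contained argument that rules out essential $k$-cycles of merely polylogarithmic volume when $k$ exceeds the real rank — this is exactly the point at which the behaviour changes qualitatively from the flat-torus regime.
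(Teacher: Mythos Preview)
Your proposal has two genuine gaps, and one place where you take a different (but workable) route from the paper.

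\textbf{The lower bound for $r<k\le n$.} You do not actually prove this; you only describe an expectation. The paper's argument is short and clean, and you should supply it. For $k>r$ the symmetric space $\XX$ satisfies a \emph{linear} isoperimetric inequality in dimension $k$ (Leuzinger, Wenger): $\mathrm{FillVol}_k(Q)\le c\,\vol_{k-1}(Q)$ for any $(k-1)$-cycle $Q$. Apply this to the boundary spheres $\partial B_p(t)$ of a $k$-ball in an essential representative $\Sigma$, lifted to $\XX$; the coarea formula gives $v'(t)=\vol_{k-1}(\partial B_p(t))\ge c^{-1}v(t)$, hence $v(\mathrm{injrad}(M_i))\ge c_1\exp(c^{-1}\,\mathrm{injrad}(M_i))\ge a_k d_i^{\beta_k}$. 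The isoperimetric input is exactly what encodes ``no $k$-flats when $k>r$'', and it turns the injectivity-radius growth directly into a power of $d_i$.

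\textbf{The upper bound in the intermediate range $r_1'<k\le r$.} Your description here is confused on two points. First, the $\tfrac13$ has nothing to do with any $\An_1$-type subgroup or with the Long--Reid surfaces of \S\ref{sec:surfaces}: the paper fixes a flat $k$-torus $T=Z\backslash\FF$ in $M$ (with $Z$ abelian), passes to its cover in $M_i$, and bounds $[Z:Z\cap\Gamma(\cP_i)]$ by the order of the image of $Z$ in $\G(\F_{q_i})$; the exponent $\tfrac13$ is Vdovin's theorem that an abelian subgroup of a finite simple group $G$ not of type $\An_1$ has order $<|G|^{1/3}$, and the $\tfrac13+\epsilon$ in the $\An_1$ case is the asymptotic form of the same bound for $\PSL_2(\F_q)$. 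Second, the Long--Reid surfaces live in $\SL_3(\R)$-quotients (type $\An_2$), and \S\ref{sec:surfaces} explicitly says their areas are \emph{not} known to be polynomially small; they cannot be used to produce the upper bound you claim.

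\textbf{A genuine difference.} For the lower bound in the range $1\le k\le r$ the paper invokes Gromov's systolic inequality for essential polyhedra (from \cite{Gromov83}), obtaining $\vol_k(C)\gtrsim\absys_1(M_i)^k$ directly. Your monotonicity argument for minimal representatives in nonpositive curvature is a legitimate alternative and gives the same $\log^k d_i$ bound, though you should be careful that the Eells--Sampson representative is a harmonic map from a possibly singular domain rather than a smooth minimal submanifold, so the monotonicity statement you appeal to needs the version for harmonic maps into nonpositively curved targets.
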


\begin{proof} The argument naturally splits into three parts corresponding to the different ranges of $k$.
	
{\bf 1.} Let $1 \leq k \leq r_1'$.
The lower bound for $k = 1$ follows from the well known fact that the $1$-dimensional systole of $M_i$ is bounded below by $c\log(d_i)$ (see \cite[Proposition~16]{GuthLub15}). Together with Gromov's systolic inequality for essential polyhedra from  \cite[Appendix~2, Theorem~B$'_1$]{Gromov83} this implies the lower bound for the other $k$ (see also \cite[Section~3.C.9]{Gromov_Luminy92}). We remark that in the terminology that we use in this paper, Gromov's ``essential'' means \emph{absolutely essential} as these polyhedra cannot be homotoped to smaller dimensional subsets.  

For the upper bound let us first consider the case when $r_1'$ is equal to the cardinality of a strongly orthogonal subset in the $\kk$-root system $\Sigma$ of the group $\G$ associated with $M$. Let $\Sigma^+$ denote the subset of positive roots. By \cite{Tits64}, for each root $\alpha\in \Sigma$ which is not twice another $\kk$-root there is a homomorphism $\rho_\alpha : \SL_2 \to \G$ defined over $\kk$ whose kernel is contained in the center of $\SL_2(\kk)$. Denote by $G_\alpha$ the image of $\SL_2(\kk)$ under $\rho_\alpha$, it is a subgroup of $\G(\kk)$ isomorphic to $\SL_2(\kk)$ or $\PSL_2(\kk)$.

It follows from the Chevalley commutator relations that the strong orthogonality of two roots $\alpha$ and $\beta \in \Sigma^+$ is equivalent to the condition that $x_\alpha x_\beta = x_\beta x_\alpha$ for any $x_\alpha \in G_\alpha$, $x_\beta \in G_\beta$. Moreover, as $\alpha$ and $\beta$ are different roots, the non-trivial elements $x_\alpha$ and $x_\beta$ are not proportional. It follows that whenever $x_\alpha$ and $x_\beta$ have infinite orders they generate a free abelian subgroup of $\G(\kk)$ of rank $2$.

Now when we descend along the sequence of the normal congruence subgroups of $\G(\kk)$, they intersect the subgroups $G_\alpha$ in their normal congruence subgroups, which implies that $G_\alpha \cap \Gamma(\cP_i)$ will contain hyperbolic elements whose displacements are proportional to $\log(d_i)$. Such elements taken from the subgroups that correspond to the different strongly orthogonal roots would commute, hence they generate a free abelian subgroup of full rank of the fundamental group of $M_i$. By a theorem of Gromoll and Wolf \cite{GrWolf71}, this implies that $M_i$ contains a flat torus whose dimension $k$ is equal to the number of generators and whose $k$-volume is proportional to $\log^k(d_i)$. This gives the upper bound for the absolute $k$-systole for $k \le r_1'$. 

In order to remove the assumption on $r_1'$, consider the homomorphisms $\rho_\alpha : \mathrm{A}_\alpha \to \G$ from the definition of the strongly orthogonal rank. The commutation and independence of their images is part of the assumption and the rest of the argument applies directly. 

\medskip

{\bf 2.} Consider the intermediate range $r_1' < k \le r$. As in part 1, the lower bound follows from the logarithmic bound on $\absys_1(M_i)$ and Gromov's systolic inequality \cite[Theorem~0.1.A]{Gromov83}.

For the upper bound we recall the well known results of Gromoll--Wolf \cite{GrWolf71} and Prasad--Raghunathan \cite{PrRag72}, which imply that for $k\le r$ the manifolds $M_i$ contain totally geodesic, immersed, $k$-dimensional flat tori. Consider one of these tori in some $M_i$. By \cite[Theorem~2.13 and discussion in the next paragraph]{PrRag72}, there exists an algebraic $\kk$-torus $\mathrm{H}$ in $\G$ associated with it. Let $\mathcal{F}$ be the flat subspace of $\XX$ corresponding to $\mathrm{H}$, $Z = \Gamma\cap\mathrm{H}(\kk)$, and $\{ Z(\cP_i) \}$ are the congruence subgroups of $Z$ which all act discretely and cocompactly on $\mathcal{F}$. By using the strong approximation property we can compare the growth rates of the covering degree $d_i$ and the degree of the covers $Z\backslash\FF \to Z(\cP_i)\backslash\FF$. We give more details about this argument below. It follows that the volumes of the tori $Z(\cP_i)\backslash\FF$ in $M_i$ grow as $b_k d_i^{\gamma_k}$ with $\gamma_k < 1$. This gives the upper bound for the growth of the absolute $k$-systoles. 

For the explicit upper bound we can use a result of Vdovin \cite{Vdovin99}: 
If $G = \G(\F_q)$ is a finite simple group not of the type $\An_1$, the order of any abelian subgroup $A < G$ is less than $|G|^{1/3}$ (while for the type $\An_1$ this would be an asymptotic upper bound when $q \to \infty$). Given our sequence of principal congruence subgroups $\Gamma(\cP_i)$ of $\Gamma$, by the strong approximation theorem (recall that the group $\G$ in the definition of arithmetic subgroups is simply connected so the strong approximation property holds, see~\cite[Section~7.4]{PlaRap94}), for sufficiently large $i$ we have:
$$
\begin{CD}
1 @>>> \Gamma(\cP_i) @>>> \Gamma @>>> \G(\F_{q_i}) @>>> 1 \\
@. @AAA @AAA @AAA @.\\
1 @>>> Z(\cP_i) @>>> Z @>>> A  \\
\end{CD}
$$
(Here $\{\cP_i\}$ is a sequence of ideals of the ring of integers $\cO$ of $\kk$ and $\F_{q_i} = \cO/\cP_i$.)

This implies (for $\G$ a simple group not of type $\An_1$):
\begin{align*}
[Z:Z(\cP_i)] & \le |A| < |\G(\FF_{q_i})|^{1/3}; \\
\vol(Z(\cP_i)\backslash\FF) &< \vol(Z\backslash\FF) \cdot |\G(\F_{q_i})|^{1/3} \leq \vol(Z\backslash\FF) \cdot d_i^{1/3}.
\end{align*}
Notice that the strong approximation property providing surjectivity of the map $\Gamma \to \G(\F_{q_i})$ is used in the last inequality. It relates the order of $\G(\F_{q_i})$ to the degree $d_i$ of the covering and is crucial for our argument.

It is easy to extend this result to a more general case of semisimple Lie groups and deduce that
$\vol(Z(\cP_i)\backslash\FF) < Cd_i^{1/3}$ for $\G$ not of the type $\An_1$, while for the type $\An_1$, given any $\epsilon > 0$ and $d_i$ large enough, $\vol(Z(\cP_i)\backslash\FF) < Cd_i^{1/3+\epsilon}$.


\medskip

{\bf 3.} Now let $r < k \le n$. Following \cite[Section~2]{Gromov83}, we recall the notion of the filling volume of a singular cycle in a complete metric space $\mathcal{M}$. First define the volume of a singular simplex $\sigma: \Delta^k \to \mathcal{M}$ as the lower bound of the total volumes of those Riemannian metrics on $\Delta^k$ for which the map $\sigma$ is distance decreasing. Then we also have the notion of the volume of a singular chain $C = \sum_i r_i\sigma_i$, namely, $\vol_k(C) = \sum_i |r_i|\vol_k(\sigma_i)$, where the coefficients $r_i$ may be real numbers, integers or residues mod $2$. Now for a $(k-1)$-dimensional singular cycle $Q$ in $\mathcal{M}$ we define the \emph{filling volume} of $Q$ as the lower bound of the volumes of those $k$-dimensional chains $C$ in $\mathcal{M}$ for which $\partial C = Q$.

By \cite[Theorem~1(ii)]{Leu14} (see also \cite[Section~$5.D(5)(b')$]{Gromov93}), for $k > r$ the symmetric space $\XX$ satisfies the linear isoperimetric inequality:
\begin{equation}\label{isoperim_ineq}
\mathrm{FillVol}_k(Q) \le c\,\vol_{k-1}(Q),
\end{equation}
where $c>0$ is a constant, $Q$ is any $(k-1)$-dimensional cycle, and $\mathrm{FillVol}$ denotes the filling volume of $Q$. Although the theorem is stated for cycles, the proof applies more generally to bound volume of a $k$-dimensional cone over a $(k-1)$-dimensional chain in $\XX$. 

Let $N$ be a Lipschitz $k$-chain nearly representing $\absys_k(M_i)$, i.e. we have $\vol(N) \leq \absys_k(M_i) + \varepsilon$ and $N$ can not be homotoped to a $(k-1)$-dimensional subpolyhedron in $M_i$. We consider $N$ with the induced metric.  
Let $\mathrm{B}_{p,N}(R) \subset N$ be a metric ball of radius $R$ with center $p \in N$ and denote $v_p(R) = \vol_k(\mathrm{B}_{p,N}(R))$. 

For any $R < R_i/2$, where $R_i$ is the injectivity radius of $M_i$, the boundary $S = \partial\,\mathrm{B}_{p,N}(R)$ is a $(k-1)$-chain in $M_i$ to which the isoperimetric inequality \eqref{isoperim_ineq} applies and the filling volume of this chain is $\geq v_p(R)-\varepsilon$ because $N$ is an almost systole. 
In more detail, consider the inclusion of  the chain $N$ into $M_i$.  Let $Y = (N \smallsetminus \mathrm{B}_{p,N}(R)) \cup Q$ (where $Q$ is the cone of $S$, as above).  The ball $\mathrm{B}_{p,N}(R)$ in $N$ has a map into $Q$ extending the identity map on $S$ (as $Q$ is contractible).  In the universal cover $\XX$, the lifts of the inclusion map and the map into $Q$ are homotopic rel boundary (again by contractibility, now of $\XX$).  As a result, the inclusion map of $N$  in $M$ is homotopic to a map lying in $Y$, so the inclusion of $Y$ in $M$ is not null-homotopic.
Combining these observations with the coarea formula (see \cite[Theorem~13.4.2]{BurZalg}), we have:
\begin{align*}
v_p(R) & = \int_0^R \vol_{k-1}\left(\partial\,\mathrm{B}_{p,N}(t)\right) dt \\
       & = v_p(R_0) +  \int_{R_0}^R \vol_{k-1}\left(\partial\,\mathrm{B}_{p,N}(t)\right) dt \\    
      & \geq v_p(R_0) + \int_{R_0}^R \frac1c (v_p(t) - \varepsilon)dt\ \text{ (by \eqref{isoperim_ineq});}\\
v_p(R) - v_p(R_0) & \geq   C_1(e^{R/c} - e^{R_0/c}),\ C_1 > 0,  
\end{align*}
for $R > R_0$ which we think of as a fraction of the injectivity radius of the base manifold $M$. 
For this argument to apply we have to ensure that $v_p(R_0) > \varepsilon$. 

We now show that $C_p = (v_p(R_0) - \varepsilon)$ is strictly positive at some point $p$. To this end we use a powerful deformation theorem of Federer and Fleming which allows us to deform general chains to the simplicial ones in a controlled way \cite{FedFl60}. Federer and Fleming’s theorem works for normal currents but as it is explained in \cite[Section~3]{CDMW18} it also holds for the Lipschitz chains. Our application, indeed, is similar to the ones considered in \cite{CDMW18}. 

First we triangulate the manifold $M$ at the scale $R_0 =  \mathrm{RadInj}(M)/10$ and lift the triangulation to the covering $M_i$. Using the Federer--Fleming argument we can deform the chain $N$ via random projection to the $k$-skeleton.  If the projection does not fill some cell, deform that image into the $(k-1)$-skeleton. Thus the definition of the absolute systole shows that some cell must be filled. The deformation theorem implies that a point in the preimage of that cell has ball of radius $R_0$ with a bounded below volume. Therefore, for sufficiently small $\varepsilon = \varepsilon(M) > 0$, we have $C_p > 0$.  

Since $2R_i = \absys_1(M_i) \ge c_2\log(d_i)$, we obtain that for $k > r$, 
$$\absys_k(M_i) \ge a_k d_i^{\beta_k},$$
with the positive constants $a_k$, $\beta_k$ depending on $M$.

\medskip

For the general upper bound in this range we recall that if $M$ is an aspherical manifold, the $k$-skeleton is essential in $M$ and the linear upper bound follows (cf. \cite[Section~3.C]{Gromov_Luminy92}). 

Now assume that $M$ (or some cover $M_{i_0}$) contains a $k$-dimensional finite volume totally geodesic arithmetic submanifold $N$. By a result of Bergeron--Clozel the manifold $N$ is arithmetic \cite[Proposition~15.2.2]{BC05} (see also \cite[Theorem~1.7]{BBKS}). Note that the field of definition of $N$ can be an extension of $\kk$, however, there is a $\kk$-group $\mathrm{H}$ associated to $N$ by Bergeron and Clozel which is a proper algebraic $\kk$-subgroup of $\G$ such that the group $\Lambda$ defining $N$  is contained in its $\cO$-points. We denote by $\mathcal{U}$ the totally geodesic symmetric subspace of $\XX$ corresponding to $N$.

Applying the strong approximation property similar to the way it was done in part~2 of the proof we have:
$$
\begin{CD}
1 @>>> \Gamma(\cP_i) @>>> \Gamma @>>> \G(\F_{q_i}) @>>> 1 \\
@. @AAA @AAA @AAA @.\\
1 @>>> \Lambda(\cP_i) @>>> \Lambda @>>> \mathrm{H}(\F_{q_i})  \\
\end{CD}
$$
(as before, $\{\cP_i\}$ is a sequence of ideals of the ring of integers $\cO$ of $\kk$ and $\F_{q_i} = \cO/\cP_i$). 

The dimension of the group $\mathrm{H}$ is strictly smaller than $\dim(\G)$, therefore, we obtain 
$$\vol(\Lambda(\cP_i)\backslash\mathcal{U}) \leq \vol(\Lambda\backslash\mathcal{U}) |\mathrm{H}(\F_{q_i})| \leq 
\vol(\Lambda\backslash\mathcal{U}) |\G(\F_{q_i})|^{\gamma_k} \leq c\,d_i^{\gamma_k},\textrm{ with } \gamma_k < 1.$$
This finishes the proof.
\end{proof}

We proceed with some remarks about the theorem.

\subsection{}\label{rem_OrtRoots}
We see that for certain simple Lie groups we may have $r_1 = r$. For example, this occurs for the split groups of the types that are listed in the last row of Table~\ref{table:sor}. Thus, in these cases there is no intermediate range $r_1 < k \le r$. From the other hand, for the irreducible lattices in semisimple product Lie groups $r_1$ is fixed while $r$ grows with the number of factors. The difference between $r_1$ and $r$ can also attain arbitrarily large values for the simple groups of type $\An$.

\subsection{}\label{rem:regulators}
The systole behavior in the intermediate range $r_1' < k \le r$ is related to the flat subspaces whose volumes depend on the regulators of certain extensions of the field of definition. For $k = r$ this is explained in Section~\ref{sec:prelim-regulators}, and a similar description extends to the other $k$.

From number theory we know the following properties of the regulator $\reg_K$:

\begin{itemize}
 \item[(a)] E. Landau \cite{Land18}: $\reg_K \le c_1(n) \sqrt{\disc_K} \log^{n-1}(\disc_K)$;
 \item[(b)] J. Silverman \cite{Silv84}: $\reg_K \ge c_2(n) \log^{r(K)-\rho(K)}(\gamma(n)\disc_K)$,
\end{itemize}
where $D_K$ denotes the absolute value of the discriminant of the field $K$, $n$ is the degree of $K$, $r(K)$ is the rank of the unit group of $K$, $\rho(K)$ is the maximum of $r(L)$ over the proper subfields $L \subset K$, and $c_1(n)$, $c_2(n)$ and $\gamma(n)$ are positive constants which depend only on the degree of the field. 

It is expected that as a function of the discriminant the regulator oscillates wildly between these bounds while staying closer to the upper bound for most of $\disc_K$. This conjecture is completely open even for the sequences of quadratic number fields. We refer to the beautiful Lenstra's article \cite{Lenstra08} for an introduction and more details about this topic. For us the number theoretic conjecture indicates that it may be possible to find congruence towers with a constant power systole growth in the intermediate range.

Geometrically regulator is the volume of a higher dimensional parallelepiped span\-ned by the vectors associated to the independent fundamental units of a field. It appears that it is often impossible to choose these units to be simultaneously small which forces the regulator to be bigger than the lower bound. Similarly, volumes of the flat tori of sufficiently large dimension in a locally symmetric space are hard to get small because when we choose some meridians short, the others are forced to be long. This observation can be made more precise by exploiting the connection with number theory. For $r$-dimensional tori this is done in Section~\ref{sec:prelim-regulators}. We do not pursue the details regarding the  intermediate dimensions $k$ but we notice that when $k \leq r_1'$ the situation changes and we can always choose small generators for the flat $k$-cycles. This is shown in part~1 of the proof of Theorem~\ref{thm3.1}.  

\subsection{}\label{rem:regulators-example} 
We can make an experiment with $\Gamma = \SL_3(\Z)$, representing perhaps the simplest non-trivial case. The quotients here will be non-compact finite volume manifolds. Although in this paper we mostly consider compact arithmetic manifolds, we expect that analogous results should hold for the non-compact finite volume spaces. The assumption that the base field $K = \Q$ simplifies the arithmetic of the computations.

Consider the matrices of the form
$$
\begin{pmatrix}
    1+p^2 &  p & p^2 \\
      p   &  1 &  p \\
      0   &  p & 1+p^2 \\
\end{pmatrix},
$$      
where $p$ is a prime number.
They correspond to hyperbolic elements $\gamma_p \in \Gamma(p)$. By formula \eqref{length formula}, the length of the  geodesic in $M_p = \Gamma(p)\backslash \XX$ corresponding to $\gamma_p$ is proportional to $\log(p)$. We now consider the area of the associated tori.

The characteristic polynomial of $\gamma_p$ is $1 - (3 + 5p^2 + 4p^4)x + (3 + 5p^2)x^2 - x^3$. It has discriminant $D_p = 144 p^{12} + 520 p^{10} + 793 p^8 + 500 p^6$ and 3 different real roots (all not equal to $\pm 1$). It follows that the elements $\gamma_p$ are $\R$-regular, moreover, they are $\R$-hyper-regular for all primes $p$ (recall that in the case of $\SL_n(\R)$  hyper-regularity means that there are no relations $\lambda_1^{m_1} \cdots \lambda_n^{m_n} = 1$ with positive integer $m_i$ among the eigenvalues $\lambda_i$ of the matrix except when $m_1 = \ldots = m_n$).

We used GP Pari to compute the regulators of the associated fields $L$ for the first 1000 primes and then produced a plot with Mathematica. The resulting graph is shown in Figure~\ref{fig1}. As expected,  the regulator oscillates between a logarithmic lower bound and a constant power upper bound.  

\begin{figure}[!ht]
\includegraphics[width=15cm]{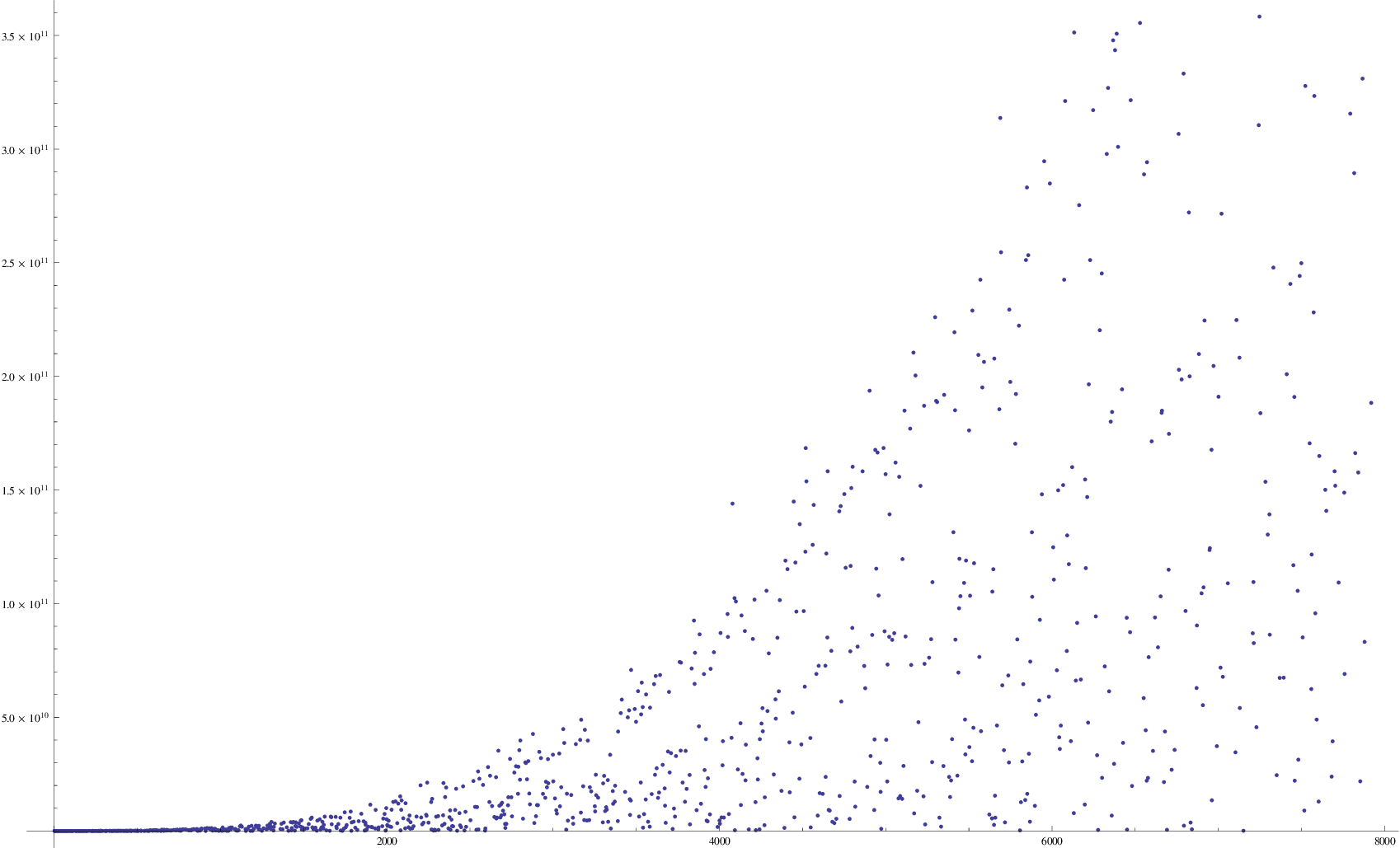}
\caption{}
\label{fig1}
\end{figure}

It is likely that both logarithmic lower bound and power function upper bound are attained for appropriate subsequences of primes, but based on what we know from number theory any result of this kind would be very hard to prove. Next, in order to obtain the volumes of the corresponding tori in $M_p$ we need to multiply the regulator by the index $[Z_{\Gamma}(\gamma_p) : Z_{\Gamma(p)}(\gamma_p)]$. It is not clear whether after this correction the (conjectural) logarithmic lower bound will persist, if not, we will have the volumes of the tori oscillating between two power functions.

In the above experiment, we considered a particular sequence of geodesics of length $c \log(p)$ in the congruence coverings $M_p \to M$ and observed that associated regulators tend to be large and oscillate. A natural question arising here is what can we say about the volumes of the tori associated with other choices of the closed geodesics in $M_p$. It is a priori possible that for some other sequence of $\gamma_p$ we would get much smaller volumes of the tori, pushing the $2$-systole bound of $M_p$ down to a logarithmic function. This behavior seems unlikely but more research is required in order to be able to rule it out. We are planning to set up a thorough computational experiment addressing this problem in a forthcoming project.

It is worth mentioning that there has been a large body of recent work about distribution of periodic torus orbits on the higher rank locally symmetric spaces. We can refer, in particular, to \cite{ELMV09} and the subsequent papers by those authors. Einsiedler, Lindenstrauss, Michel, and Venkatesh use a different notion of the discriminant of a periodic orbit which they introduce in \cite{ELMV09}, however, it can be shown that their notion is related to the discriminant of the field $L$ defined in Section~\ref{sec:prelim-regulators}. Among the other results, it was proved in \cite{ELMV09} that periodic torus orbits in $\PGL_3(\Z)\backslash\PGL_3(\R)$ have the following properties:
\begin{enumerate}
\item The volume of a periodic orbit of discriminant $D$ is bounded, up to constants, between $\log(D)$ and $D^c$ for $c > 0$; 
\item The number of periodic torus orbits of discriminant at most $D$ is bounded, up to constants, between $D^A$ and $D^B$ for $A,B > 0$; and 
\item The orbits of small volume close to $\log(D)$ come in packets with large multiplicity of order about $D^{1/2+o(1)}$, where all orbits in a packet share the same discriminant, volume and even shape.
\end{enumerate}
The latter property is particularly interesting from our viewpoint as it indicates the existence of small tori with large multiplicity in the congruence coverings.

\subsection{}\label{rem4.5} 
Let us remark that the quantitative values for the exponent $\gamma_k$ in part~(ii) of the theorem require the assumption that the congruence subgroups are \emph{principal}. In particular, it is possible to construct sequences of non-principal congruence coverings for which the corresponding bound for the constant $\gamma_k$ will be bigger and the growth of $\absys_k(M_i)$ can be faster than for the principal congruence subgroups. This is interesting because of the connection between the growth of systoles and quantum error-correcting codes, with the faster growth rate corresponding to better codes. We refer to \cite{GuthLub15} for more details. In particular, the upper bound of order $n^{0.3}$ from \cite[Remark~20]{GuthLub15} changes to $n^{0.5}$ if instead of the congruence kernels $\Gamma_N$ considered there we take the preimages of the unipotent radicals of the Borel subgroups. It is not clear how close this bound is to the actual asymptotic growth of systoles but notice that we still get the corresponding exponent $\gamma_k < 1$. 

\subsection{}\label{rem4.6} 
It is known that the manifolds $M_i$ do not necessarily contain $k$-dimensional finite volume totally geodesic submanifolds. For example, arithmetic hyperbolic manifolds which are not of the simplest time do not have codimension $1$ totally geodesic subspaces (see e.g. \cite{BBKS}). The notion of \emph{fc-subspaces} introduced in \cite{BBKS} can be applied to effectively rule out totally geodesic subspaces of certain sufficiently large dimensions in other examples of arithmetic manifolds. Can we expect a much faster growth of $\absys_k$ in these cases? This leads to the following open problem:
\begin{question*} 
Does there exist a sequence of congruence coverings of an arithmetic $n$-dimensional manifold whose $\absys_k$ grows linearly with the degree for some $k < n$?
\end{question*}

\subsection{}\label{rem4.7} 
It would be interesting to know more about the constants $a_k$, $b_k$ and their dependence on the symmetric space $\XX$ and the locally symmetric space $M$. In particular, we can ask about the best \emph{asymptotic values} of the constants. Besides of the trivial case $\absys_n(M_i) = \vol_n(M) d_i$, these values are known for $\absys_1$ of  arithmetic hyperbolic $2$-manifolds~\cite{BS94}, arithmetic hyperbolic $3$-manifolds~\cite{KSV07}, Hilbert modular varieties \cite{Mur17}, arithmetic hyperbolic $n$-manifolds of the simplest type~\cite{Mur19}, and quaternionic hyperbolic manifolds \cite{EKM22}.

\section{Homological systoles}\label{sec:homsys}

Let $A$ denote the ring $\Q$, $\Z$ or $\Z_q$, where $q$ a prime number. (The results of this section also apply to the case $A = \F_q$, a finite field with $q$ elements where $q$ is a prime power.) The \emph{$k$-dimensional systole} $\sys_k(M; A)$ is defined as the infimum of the $k$-dimensional volumes of the non-trivial cycles in $\HH_k(M; A)$. We would like to investigate the behavior of the homological systole $\sys_k(M_i; A)$ in the sequences of congruence coverings $\{M_i \to M\}$ of an arithmetic manifold $M$.

The main principle here is the following: Whenever we have a non-trivial cycle in $\HH_k(M; A)$, the corresponding lower bound from Theorem~\ref{thm3.1} applies for $\sys_k(M; A)$. This follows immediately from inequality \eqref{eq:absys-sys} in Section~\ref{sec:prelim-systoles}. Moreover, when we know the origin of some non-trivial cycles in $M$ we can often prove that the corresponding upper bound applies as well. This, however, depends on a concrete situation. In particular, by Borel's stability theorem \cite{Borel74} the groups $\HH_k(M, \Q)$ are trivial for certain $k$ and hence we cannot expect the two-sided inequalities for the homological systole to hold in general.

Over the last few years several new interesting results on non-trivial cycles in homology of arithmetic locally symmetric manifolds were published. In particular, we call attention to the papers \cite{Avr15, Bena21, Zschumme21} whose constructions we will use in this section. For the basic facts in algebraic topology that are frequently used in this section we refer to Hatcher's book~\cite{Hatcher-AlgTop}.

\medskip

We now prove two lemmas that capture the bottom and the top of the systolic spectrum.

\begin{lemma}\label{lem41}
Let $M_i$ be a cover of a compact higher rank irreducible arithmetic manifold $M$ which corresponds to a congruence subgroup $\Gamma(\cP_i)$ of  $\pi_1(M) = \Gamma < \G(\kk)$ of sufficiently large level $\cP_i$ with $\cO/\cP_i \cong \F_{q_i}$. Define $r_1' = r_1'(M)$ as in Theorem~\ref{thm3.1}. Then for $1 \le k \le r_1'$ the group $\HH_k(M_i; \Z)$ contains a non-trivial element 
(detected by mod $q_i$ cohomology) and we have
$$ a_k \log^k(q_i) \le  \sys_k(M_i; A) \le b_k \log^k(q_i),$$
where $a_k$, $b_k$ are positive constants depending on $M$ and $A = \Z$ or $\Z_{q_i}$.
\end{lemma}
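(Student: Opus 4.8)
The plan is to leverage Theorem~\ref{thm3.1}, part~1 of whose proof already produces, for $1\le k\le r_1'$, an embedded flat $k$-torus $T_i\subset M_i$ whose $k$-volume is $\asymp \log^k(q_i)$, built from mutually commuting hyperbolic elements $\gamma_1,\dots,\gamma_k$ coming from the $\SL_2$ (or $\An_1$-form) subgroups $G_{\alpha_j}$ attached to a maximal strongly orthogonal set. These elements already lie in $\Gamma(\cP_i)$ by construction, and their displacements are proportional to $\log(q_i)$ because intersecting $G_{\alpha_j}$ with the principal congruence subgroup produces a principal congruence subgroup of $G_{\alpha_j}$ of level $\cP_i$. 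For the \emph{lower} bound on $\sys_k$ I would simply invoke inequality~\eqref{eq:absys-sys} together with the lower bound $\absys_k(M_i)\ge a_k\log^k(d_i)$ from Theorem~\ref{thm3.1}, noting that $\log d_i \asymp \log|\G(\F_{q_i})| \asymp \log q_i$ (up to a constant depending only on $\G$, since $d_i$ is a fixed power of $q_i$ times lower-order factors). So the real content is the \emph{upper} bound and, simultaneously, exhibiting the claimed torsion class.

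The key step is to verify that the fundamental class $[T_i]\in\HH_k(M_i;\Z)$ is \emph{nonzero and of finite order divisible by $q_i$}. For the finite-order assertion: because $M_i$ fibers (rationally, via Borel/Matsushima-type vanishing, or more elementarily by a transfer-to-$\HH_k(M;\Q)$ argument — $[T_i]$ pushes forward to a class in $\HH_k(M;\Q)$ living in a range where one can arrange vanishing, or one simply notes the class is in the image of a map from a lattice with no $k$-dimensional rational cohomology of the relevant type) the class $[T_i]$ is torsion. To see it is nonzero and its order is divisible by $q_i$, the natural approach is to map to the torus $T=Z\backslash\FF$ in $M$ whose cover $T_i\to T$ has degree $\asymp q_i^{?}$: the covering $T_i\to T$ is a $\Z_{q_i}^k$-cover (each commuting generator contributes a $\Z_{q_i}$ because $G_{\alpha_j}\cap\Gamma$ surjects onto an $\An_1(\F_{q_i})$ and the congruence subgroup is the kernel), so under the composite $\HH_k(T_i;\Z)\to\HH_k(M_i;\Z)$ the class $[T_i]$ maps from the generator of $\HH_k(T_i;\Z)\cong\Z$; the transfer $\HH_k(T;\Z)\to\HH_k(T_i;\Z)$ multiplies by the degree, and a diagram chase with the inclusions $T_i\hookrightarrow M_i$, $T\hookrightarrow M$ forces the order of $[T_i]$ to be divisible by $q_i$ (essentially because killing $[T_i]$ would let one kill a class detected on $T$ after multiplying by something coprime to $q_i$). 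Then the upper bound $\sys_k(M_i;\Z)\le \vol_k(T_i)\le b_k\log^k(q_i)$ is immediate since $T_i$ is an honest cycle representing this torsion class, and the same bound holds with $\Z_{q_i}$ coefficients via the reduction map, which sends $[T_i]$ to a nonzero class (as $q_i\mid \mathrm{ord}[T_i]$ forces the mod-$q_i$ reduction to survive).

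The main obstacle I expect is the homological algebra pinning down that $[T_i]$ is \emph{nonzero} and has order \emph{exactly} divisible by $q_i$ — as opposed to it possibly bounding in $M_i$ for arithmetic reasons invisible on the torus. The cleanest route is probably a direct covering-space/transfer argument on the pair $(T_i\hookrightarrow M_i)\to (T\hookrightarrow M)$: one needs that the image of $\HH_k(T;\Z)$ in $\HH_k(M;\Z)$, after restricting back to $\HH_k(T_i;\Z)$ via transfer, forces $q_i$ to divide the order; here one must be careful that $T$ itself may carry a nonzero (possibly infinite-order) class in $\HH_k(M;\Z)$, in which case the argument instead shows the order of $[T_i]$ in $\HH_k(M_i;\Z)$ is a multiple of $q_i$ because the $\Z_{q_i}^k$-covering $T_i\to T$ makes the restriction-then-transfer composite multiplication by a power of $q_i$ on the relevant summand. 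Once this torsion statement is in hand, the rest is a two-line sandwich between Theorem~\ref{thm3.1} and the explicit cycle $T_i$, together with the observation $\log q_i\asymp\log d_i$.
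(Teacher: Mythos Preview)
Your lower-bound step and your choice of the cycle $T_i$ agree with the paper, but the heart of the argument --- showing $[T_i]\neq 0$ in $\HH_k(M_i;\Z_{q_i})$ --- is done very differently there, and your proposed route has a genuine gap.

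The paper never compares $(T_i\hookrightarrow M_i)$ with a torus $T\hookrightarrow M$ in the base via transfer. Instead it writes down an explicit homomorphism
\[
\phi:\Gamma(\cP_i)\longrightarrow \M_m(\F_{q_i}),\qquad I+\cP_iC\longmapsto [C]_{\cP_i},
\]
which factors through the abelianization and thereby produces $q_i$-torsion in $\HH_1(M_i;\Z)$ directly. For $k>1$, restricting $\phi$ to the subgroup $\G_1$ generated by the strongly orthogonal $\An_1$'s yields $k$ independent torsion classes $c_1,\dots,c_k\in\HH_1(M_i;\Z)$ with commuting representatives of displacement $\asymp\log q_i$; letting $\alpha_1,\dots,\alpha_k$ be the dual cocycles, one checks on the torus $T_i$ spanned by these elements (via Gromoll--Wolf) that $\langle\alpha_1\cup\cdots\cup\alpha_k,[T_i]\rangle\neq 0$, and this cup-product pairing is what certifies $[T_i]\neq 0$ in $\HH_k(M_i;\Z)$ and in $\HH_k(M_i;\Z_{q_i})$. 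The detecting cohomology classes live entirely in $M_i$ and come from the congruence condition itself; no comparison with $M$ is needed.

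Your transfer scheme, by contrast, requires input about the image of $[T]$ in $\HH_k(M;\Z)$ that you never establish. You treat the case ``$[T]$ has infinite order in $\HH_k(M;\Z)$'' as the one needing care, but the opposite is true: that case is the easy one, and the hard case is $[T]=0$ in $\HH_k(M;\Z)$. Nothing in the hypotheses rules this out, and if $[T]$ bounds in $M$ then your transfer diagram yields no information about whether $[T_i]$ bounds in $M_i$. Whether flat tori represent nonzero classes in $\HH_k(M;\Q)$ is in fact a delicate question in its own right (cf.\ the discussion in Section~\ref{sec:homsys}). Likewise, your claim that $[T_i]$ is torsion via ``Borel/Matsushima-type vanishing'' or transfer to $\HH_k(M;\Q)$ presupposes control of the rational homology in degree $k$ that is not available in this generality. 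The paper's cup-product argument sidesteps all of this by building the detecting classes from $\phi$, which exists precisely because $\Gamma(\cP_i)$ is a principal congruence subgroup.
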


\begin{proof}
The basic case is $k = 1$. Let $\cP = \cP_i$ be the level of the congruence subgroup $\Gamma(\cP_i)$, $\cO/\cP \cong \F_{q}$, and assume that $\Gamma(\cP)\subset \GL_m(\cO)$. In order to produce a non-trivial cycle in $\HH_1(M_i; \Z_q)$ consider the map
\begin{align*}
\phi: \Gamma(\cP)   &\to \M_m(\F_{q}),\\
 \phi(I + \cP C) &= [C]_\cP,
\end{align*}
where $[C]_\cP$ denotes the projection of the matrix $C$ to $\M_m(\F_{q})$. This map is a homomorphism from $\Gamma(\cP)$ to the additive group of $m\times m$ matrices over $\F_{q}$. Indeed, given $B_1 = I + \cP C_1$ and $B_2 = I + \cP C_2$, we have:
$$\phi(B_1B_2)  = \phi(I + \cP(C_1 + C_2) + \cP^2C_1C_2) = [C_1 + C_2]_\cP = \phi(B_1) + \phi(B_2).$$
By the strong approximation property \cite[Section~7.4]{PR09}, for almost all $\cP$ the image of the map $\phi$ is a non-trivial subspace of $\Z_q^{m^2} \cong \M_m(\F_{q})$ (indeed the image is the Lie algebra of $\G$ in the Lie algebra of $\SL_n$). Hence the induced map $\phi_*: \HH_1(M_i; \Z_q) \to \HH_1(\Z_q^{m^2}; \Z_q)$ has a non-zero image, which implies that $\HH_1(M_i; \Z_q)$ is non-trivial. This argument is similar to the one used for $\Gamma = \SL_m(\Z)$ in \cite[Section~2]{ChWe14}.

By Margulis's normal subgroup theorem the group $\HH_1(M_i; \Z)$ is finite (see \cite[Theorem~4, p.~3]{Marg91}), hence the universal coefficient theorem implies that $\HH_1(M_i; \Z)$ has an element of order divisible by $q$. 

The lengths of non-trivial cycles in $\HH_1(M_i; \Z)$ are bounded from below by the absolute $1$-systole. For the upper bound observe that every basis of $\HH_1(M_i; \Z)$ contains a cycle whose length is smaller than the diameter of $M_i$, and the latter is bounded above by $c \log\big(\vol(M_i)\big)$ by a theorem of Brooks \cite{Br92}.

Now let $1 < k \le r_1'$. We have a set of $\kk$-homomorphisms $\rho_1$,\ldots $\rho_k$, whose image generates a $\kk$-subgroup of $\G$ which we denote by $\G_1$. We restrict attention to this group and its congruence subgroups $\Gamma_1(\cP) =  \Gamma(\cP)\cap\G_1(\kk)$, $\cP = \cP_i$ all considered as subgroups of $\GL_m(\cO)$. Applying the map $\phi$ defined above to the group $\Gamma_1(\cP)$ we find $k$ non-trivial independent cycles $c_1,\ldots,c_k\in\HH_1(M_i; \Z)$ of orders divisible by $q$ that have mutually commuting preimages in $\Gamma(\cP)$ (cf. Part~1 of the proof of Theorem~\ref{thm3.1} and note that the Lie algebras of the images of the maps $\rho_j$ are orthogonal subalgebras of the Lie algebra of $\G$).
Applying the theorem of Brooks~\cite{Br92} to the quotient spaces associated to the homomorphisms $\rho_j$ we can choose the cycles $c_j$, $j = 1,\ldots, k$, so that their lengths are bounded above by $c\,\log\big(\vol(M_i)\big)$. 
Let $\alpha_1,\ldots,\alpha_k$ be the dual cocycles in $\HH^1(M_i; \Z_q) = \mathrm{Hom}(\HH_1(M_i; \Z);\Z_q)$.

By a theorem of Gromoll and Wolf \cite{GrWolf71}, the independent commuting elements $c_1,\ldots,c_k \in \Gamma(\cP)$ span a $k$-dimensional flat torus $T$ in $M_i$. A basic calculation shows that the cup product $\alpha_1\cup\ldots\cup\alpha_k$ evaluated on the torus $T$ is non-trivial (see \cite[Example~3.11, p.~210]{Hatcher-AlgTop}). Therefore, the product is non-trivial in cohomology $\HH^k(M_i; \Z_q)$ and the torus is non-trivial in homology $\HH_k(M_i; \Z)$ (or $\HH_k(M_i; \Z_q)$, as is shown by the same argument). 

By the construction, the volume of $T$ is bounded above by $b_k\log^k(q_i)$. The lower bound for the volume of $T$ as well as for any other non-trivial $k$-cycle follows from inequality~\eqref{eq:absys-sys} and Theorem~\ref{thm3.1}.
\end{proof}

\begin{rem}
There is a substantial research on stability of homology of arithmetic groups with coefficients in a ring (see e.g. \cite{Calegari15} and the references therein). As a sample result we can mention Calegari's theorem that under certain conditions evaluates the groups $\HH_k(\Gamma_N(\mathfrak{p}^m); \Z_p)$, where $\Gamma_N(\mathfrak{p}^m)$ is a congruence subgroup of $\SL_N(\cO)$ and $\mathfrak{p}$ is a prime ideal in $\cO$ above a rational prime $p$ (see \cite[Theorem~5.7]{Calegari15} for a precise statement). Here $N$ has to be taken sufficiently large and one can make this condition explicit by using the previous work of Maazen \cite{Maazen79} and van der Kallen \cite{Kallen80}. Thus, by Theorem~4.11 from \cite{Kallen80}, when $\cO$ is a principal ideal domain we have  $N \geq 2k + 1$, so when $N$ is odd this degree $k$ agrees with the strongly orthogonal rank while when $N$ is even the degree is $r_1 - 1$ (cf. Table~\ref{table:sor}). The method of Lemma~\ref{lem41} does not allow us to recover the whole homology groups, but instead we can capture the special cycles about which we have more geometric information. In this way we are explicitly seeing a small part of Calegari's picture.
\end{rem}

\begin{lemma}\label{lem42}
Let $M$ be a closed orientable manifold and assume that $\HH_k(M)$ \linebreak
$\big(=\HH_k(M;\Z)\big)$ 
contains a non-trivial $q$-torsion. Then the groups $\HH_{n-k-1}(M; \Z_q)$ and $\HH_{n-k}(M; \Z_q)$ are non-trivial. 
The same results apply for non-orientable closed manifolds if $q = 2$. 
\end{lemma}

\begin{proof}
By the universal coefficient theorem, the map
$$\mathrm{Ext}(\HH_k(M), \Z_q) \to \HH^{k+1}(M; \Z_q)$$
is injective, and since $\HH_k(M)$ has a non-trivial $q$-torsion, we have $\mathrm{Ext}(\HH_k(M), \Z_q) \neq 0$ (indeed, $\mathrm{Ext}(\Z_m, \Z_n) \cong \Z_{(m,n)}$ for any $m,n \ge 2$). This shows that $\HH^{k+1}(M; \Z_q)$ is non-zero. 

By the Poincar\'e duality,
$$\HH_{n-k-1}(M; \Z_q) \cong \HH^{k+1}(M; \Z_q),$$
which proves the first assertion of the lemma.

For proving the non-triviality of $\HH_{n-k}(M; \Z_q)$ notice that $\mathrm{Tor}(\HH_k(M), \Z_q) \neq 0$ implies that $\HH_k(M; \Z_q) \neq 0$, and hence $\HH_{n-k}(M; \Z_q) \neq 0$ by the  Poincar\'e duality and the universal coefficient theorem (since $\Z_q$ is a field).   

Every manifold is $\Z_2$-orientable hence in this case we do not need the orientability assumption.
\end{proof}

We now consider in detail three examples of real rank $2$ spaces. 

\begin{example}\label{ex41}
Let $\XX = \Hy^2\times\Hy^2$ the symmetric space of $H = \SL_2(\R) \times \SL_2(\R)$ and let $M$ be a compact orientable irreducible arithmetic $\XX$-manifold defined over a field~$\kk$. We have
$$\dim(M) = 4\text{, the real rank }r(M) = 2\text{, and } r_1'(M) = r_1(M) = 1.$$
Consider a sequence of principal congruence coverings $\{M_i \to M\}$ of degrees $d_i$. The real rank of $H$ is $>1$, hence by the Margulis normal subgroup theorem \cite[Theorem~4, p.~3]{Marg91} the group $\HH_1(M_i; \Z)$ is finite.
We can apply Lemma~\ref{lem41} to confirm that for sufficiently large level $\HH_1(M_i; \Z)$ is non-trivial and then apply Lemma~\ref{lem42} to deduce non-triviality of $\HH_3(M_i; \Z_{q_i})$ and $\HH_2(M_i; \Z_{q_i})$, with $q_i$ defined by the level of $M_i$. Therefore, we have
\begin{align*}
a_1 \log(d_i) &\le  \sys_1(M_i; \Z_{q_i}) \le b_1 \log(d_i);\\
a_2 \log(d_i)^2 &\le  \sys_2(M_i; \Z_{q_i}) < b_2 d_i^{\gamma_2};\\
a_3 d_i^{\beta_3} &\le  \sys_3(M_i; \Z_{q_i}) \le  b_3 d_i.
\end{align*}

In dimension $2$ we can say more. If we consider a flat torus $T \subset M$, its universal covering $\widetilde{T}$ is a $2$-dimensional rational flat subspace of $\XX$ for which we can find a complementary $2$-dimensional rational flat intersecting $\widetilde{T}$ in a single point (see \cite{Zschumme21}). This implies that for sufficiently large $i$ the manifolds $M_i$ contain a couple of totally geodesic tori which intersect transversally in finitely many points with all the intersections having the same sign \cite[Proposition~7.15]{Zschumme21}.
It follows that the classes of these tori are non-trivial in $\HH_2(M_i; \Q)$ and  $\HH_2(M_i; \Z)$ (see~\cite{Schw_survey}). This observation implies
$$a_2 \log(d_i)^2 \le  \sys_2(M_i; A) < b_2 d_i^{\frac13+\epsilon}, \textrm{ for }A = \Q \textrm{ or } \Z.$$

A similar argument applies to $k$-systoles of the locally symmetric spaces associated to $\XX = (\Hy^2)^k$ for $k \ge 2$. The details of the proof of non-triviality of the flat cycles are given in \cite{Zschumme21}.
\end{example}

\begin{example}\label{ex42}
Let $H = \SL_3(\R)$, $\XX$ its symmetric space, and $M$, $M_i \to M$ are associated compact arithmetic manifolds defined as in the previous example. We have
$$\dim(M) = 5,\ r(M) = 2\text{, and } r_1'(M) = r_1(M) = 1.$$
As before, using Lemmas~\ref{lem41} and \ref{lem42} we can show non-triviality of $\HH_k(M_i; \Z_{q_i})$ for $k = 1$, $3$, $4$ and sufficiently large $i$ and write the corresponding bounds for the homological systole. 

In dimension $k = 2$, the locally symmetric spaces $M_i$ have totally geodesic flat tori, however, the previous argument does not apply here and it is not clear how to check if they give rise to any non-trivial cycles. The case of non-compact $\XX$-manifolds was studied by Avramidi and Nguyen--Phan in \cite{Avr15}. They showed non-triviality of the flat cycles in $\HH_2(M_i; \Q)$ and raised the question about what happens in the compact case (see \cite[Section~9]{Avr15}).

Davis and Weinberger showed that no rational homology sphere of dimension $1$ mod $4$ can have a free $\Z_2\times\Z_2$ action \cite{Davis83, Weinb86}. Consequently, when taking a cover $M_i \to M$ corresponding to a group that contains $\Z_2\times\Z_2$, which holds for almost all principal congruence coverings, the manifold $M_i$ is not a rational homology sphere. Hence the Kazhdan Property~(T) (which implies that $H_1(M_i;\Q) = 0$) and the Poincar\'e duality show that there is non-trivial rational homology in dimensions $2$ and $3$ for this manifold. (Turning this argument into a quantitative one is a fascinating question.) 

\end{example}

\begin{example}\label{ex43}
Let $H = \SO(2,3)$ and $\XX$, $M$, $M_i \to M$ are the associated spaces defined as in the previous examples. We have
$$\dim(M) = 6,\ r(M) = 2\text{, and } r_1'(M) = 2.$$
Now Lemmas~\ref{lem41} and \ref{lem42} cover the dimensions $k = 1$, $2$, $4$, and $5$. Moreover, in dimension $3$ we have totally geodesic hyperbolic subspaces associated to $\kk$-rational embeddings $\SO(1,3) \hookrightarrow \SO(2,3)$. Similar to the discussion in Example~\ref{ex41} we can consider two complementary $3$-dimensional totally geodesic $\kk$-subspaces of $\XX$ which intersect in a single point, and then show that for sufficiently large $i$ the manifolds $M_i$ contain a couple of totally geodesic $3$-subspaces intersecting transversally in finitely many points with all the intersections having the same sign. This implies non-triviality of $\HH_3(M_i; \Q)$ for sufficiently large $i$. This construction fits into a general perspective developed by Millson and Raghunathan in \cite{MilRag81}
(see also \cite[Part~III]{Schw_survey} for an exposition). As before, non-vanishing of homology allows us to produce bounds for homological systoles. 
\end{example}

\section{Surfaces in higher rank manifolds}\label{sec:surfaces}

So far, while dealing with the upper bounds for systoles below the rank of the space we were mainly considering tori and corresponding cycles in homology. A natural question is whether there exist more complicated non-trivial topological subspaces that may have smaller induced volume. The basic case is about the $2$-dimensional systole in congruence coverings of higher rank arithmetic manifolds. We now look at it more carefully.

Let $M$ be a compact arithmetic locally symmetric manifold of real rank $r \ge 2$ and let $\{M_i \to M\}$ be a sequence of its regular congruence coverings of degrees $d_i$. We know that $\absys_1(M_i)$ grows to infinity. The following concrete question arises naturally:

\begin{question} \label{qq}
Does there exist a sequence of $\pi_1$-injective surfaces $S_i \hookrightarrow 
M_i$ such that all $S_i$ have bounded genus which is greater than 1?
\end{question}

Notice that the answer to this question is negative in the real rank one case where by \cite[Theorem~5.1]{Bel13} the genus is bounded below by an exponential function of the systole. Moreover, if such a sequence of surfaces exists, the metric on the minimal area representative of the homotopy class of $S_i$ will have to be (almost) flat at almost all points. The latter property can be checked by an argument similar to \cite[Proof of Theorem~C2]{Rez93}:

Let $\Sigma$ be a minimal area surface homotopic to $S_i$. It is a compact surface of genus $g$ with systole $s = \absys_1(\Sigma)$. Since $S_i$ is $\pi_1$-injective in $M_i$, we have $s \ge \absys_1(M_i)$ which grows to infinity with $i$.
By the Eells--Sampson theorem we have that $\Sigma$ is the image of a harmonic immersion of a Riemannian surface into $M_i$. 
Assume that the curvature $K(\Sigma') \le -\epsilon < 0$ at an open subset $\Sigma' \subset \Sigma$. By the Gauss--Bonnet theorem 
$$4\pi(g-1) \ge - \int_{\Sigma'} K dA \ge \epsilon\,\area(\Sigma').$$
Hence $\area(\Sigma') \le 4\pi(g-1)/\epsilon$ is  bounded while $\area(\Sigma) \to \infty$ with $i$. In fact, Gromov's systolic inequality for surfaces of genus $g > 1$ from \cite[Section~6.4]{Gromov83} implies that
$$\area(\Sigma) \ge C\frac{g}{\log^2(g)} \absys^2_1(M_i),$$
where $C > 0$ is an absolute constant.  

In a recent paper \cite{LongReid19}, Long and Reid were able to prove the first positive results towards Question~\ref{qq}. They showed, in particular, that for a lattice $\Lambda < \SL(3,\R)$ from a certain infinite set of pairwise non-commensurable uniform lattices, $\Lambda$ is commensurable with an infinite sequence of torsion-free congruence subgroups $\Gamma_j$ such that each $\Gamma_j$ contains a thin surface subgroup of a fixed genus. Here \emph{thin} means a Zariski dense subgroup of infinite index; this class of groups was introduced by Sarnak and subsequently has attracted a lot of interest (see~\cite{Sarnak14notes}).
The proof of Long and Reid is a nice geometric application of higher Teichm\"uller theory. It is worth mentioning that the key technical step of the argument in \cite{LongReid19} boils down to the solutions of Pell's equation.
Of course, this calls to mind Lenstra's article \cite{Lenstra08}, but we know of no relation between the areas of the Long--Reid surfaces and the regulators. 
In fact, we do not know any number theoretical invariant connected to these areas. One can speculate about a possible relation with higher regulators, but no results of this kind are available at this time. 

A particular class of surfaces that can be good candidates for geometric representatives of the Long--Reid construction are the genus $g > 1$ piecewise flat surfaces with a finite number of singular points. They will clearly satisfy the curvature conditions given above and can be realized by piecewise harmonic immersions. 

Notice that dimension of the Long--Reid surfaces matches precisely the conditions on the intermediate range for $\SL_3(\R)$: $r_1' < 2 \leq r$. It may occur that the $2$-dimensional absolute systole of the manifolds $M_j$ associated to $\Gamma_j$ is realized by such surfaces $S_j$, in which case we would like to have a control of their area. At this point, the available geometrical tools applied in the proof of Theorem~\ref{thm3.1} and Gromov's systolic inequality allow us to show that   
$$ a\,\frac{g_j}{\log^2(g_j)} \log\big(\vol(M_j)\big)^2 \le  \area(S_j) \le b\,\vol(M_j)^{\gamma},$$
where $g_j > 1$ is the genus of $S_j$ and $a$, $b$, $\gamma$ are positive constants with $\gamma < 1$.
The lack of a link to number theory prevents us from proving more. In this construction we would like to be able to compare the areas of the Long--Reid surfaces and the areas of the flat tori in $M_j$. The latter are controlled by the regulators and we have a conjectural picture about their oscillation along the congruence sequence. 

For the other groups Question~\ref{qq} remains open. In particular, it would be interesting to answer this question for the Hilbert modular varieties and for the compact irreducible manifolds covered by a product of hyperbolic planes. The method of \cite{LongReid19} falls short from covering these cases. 

\subsection*{Acknowledgements}
We thank Ted Chinburg, Manfred Einsiedler, Alex Eskin, Altan Erdnigor, Mikolaj Fraczyk, and Robert Young for helpful discussions related to this work.  This paper obviously owes a great deal to the ideas and questions raised by Gromov in \cite{Gromov_Luminy92}. We would like to thank the anonymous referees for their stimulating comments on the previous versions of this paper that surely have improved it in a number of ways. 

\bibliographystyle{amsplain}
\bibliography{mbel}

\end{document}